\title{Multidimensional extrapolated global  proximal gradient and applications for  image processing}
\author{A. Bentbib\thanks{Laboratory LAMAI, Faculty of Science and Technology, University Cadi Ayyad, Marrakech } \and K. Jbilou\thanks{Universit\'e de Lille Nord de France, LMPA, ULCO, 50 rue F. Buisson,  Calais-Cedex, France and Vanguard Center, university um6p, Benguérir Morocco} \and R. Tahiri\footnotemark[1]}
\newcommand*{\addFileDependency}[1]{
	\typeout{(#1)}
	\@addtofilelist{#1}
	\IfFileExists{#1}{}{\typeout{No file #1.}}
}
\begin{document}

\maketitle

\begin{abstract}
The proximal gradient method is a generic technique introduced to  tackle the non-smoothness in optimization problems, wherein the objective function is expressed as the sum of a differentiable convex part and a non-differentiable regularization term. Such problems with tensor format are of interest in many fields of applied mathematics such as image and video processing. Our goal in this paper is  to address the solution of such problems with a more general form of the regularization term. An adapted iterative proximal gradient method is introduced for this purpose. Due to the slowness of the proposed algorithm, we use new tensor extrapolation methods to enhance its convergence. Numerical experiments on color image deblurring are conducted to illustrate the efficiency of our approach.
\end{abstract}
\begin{keywords}
  Minimization problem, proximal gradient, tensor, extrapolation, convergence acceleration.
\end{keywords}



\section{Introduction}
\label{sect:intro}  
In this paper we are concerned with the  minimization problem

\begin{equation}\label{problem}
\min _{\mathcal{X} \in \Omega}\left(\frac{1}{2}\|\mathscr{F}(\mathcal{X})-\mathcal{B}\|_{F}^{2}+\varphi_{\mu}( \mathcal{X})\right),
\end{equation}
where the solution $\mathcal{X}$ and the observation $\mathcal{B}$ are $N$ th-order tensors in $\mathbb{R}^{I_{1} \times I_{2} \times \cdots \times I_{N}}$, $\mathscr{F}$ is a given linear tensor operator on $\mathbb{R}^{I_{1} \times I_{2} \times \cdots \times I_{N}}$, and  $\Omega$ is assumed to be a nonempty closed bounded convex set of the space $\mathbb{R}^{I_{1} \times I_{2} \times \cdots \times I_{N}}$. In this paper, the space of thensors that will be considered will be denoted by $\mathbb{T}=\mathbb{R}^{I_{1} \times I_{2} \times \cdots \times I_{N}}$.\\
The regularization term consists of the positive  real-valued function  $\varphi_{\mu}$  assumed to have the composition form
\begin{equation}\label{phi}
\begin{aligned}
\varphi_{\mu}=\mu\phi \circ  \mathscr{L} : &\mathbb{T}  \longrightarrow \mathbb{T}\longrightarrow \mathbb{R}_{+} \\
 \mathcal{X} & \overset{\mathscr{L}}{\longmapsto} \mathscr{L}(\mathcal{X}) \overset{\mu\phi}{\longmapsto }\mu\phi(\mathscr{L}(\mathcal{X})),
\end{aligned}
\end{equation}
where $\mathscr{L}$ is a given linear tensor operator, $\phi$ is a closed proper lower semicontinuous convex  non-differentiable function and $\mu$ is a positive regularization parameter. Under  these assumptions on $\varphi_{\mu}$ and $\Omega$, it was proven that  there exists a unique solution of the  problem (\ref{cnvx}); see  \cite{2bauschke,13combettes,15ekeland,21haddad,41rockafellar} for more details.
 
The convex optimization problem (\ref{problem}) provides a general framework  that will cover a wide range of high-order regularization problems. Specific choices of $\phi$ and $\mathscr{L}$ lead to various types of regularisation, including  $l_1$-regularization propblems  \cite{ista1,ista,ista2}, $l_1$-$l_2$-regularization process  \cite{l2,l1}, and total variation regularization problems  \cite{t2,benc,t1,42rudin} which  aim is preserving sharp edges (discontinuities) in data while removing noise and  unwanted fine-scale detail. Section \ref{S7} is dedicated to exploring specific instances of these cases.

The problem (\ref{problem}) represents a constrained  multidimensional  regularization problem that has  many  applications in different areas, such as  artificial neural network and machine learning  \cite{m2,m1}, image and video processing  \cite{benc,i2,i3,i1}. Our goal in this paper is to develop a general gradient-like method to handle the non-differentiability of the regularization term  in problem (\ref{problem}). Our minimization  approach is twofold. Firstly,   computing the proximal mapping of the regularization term, we look for a unconstrained minimizer of  the problem (\ref{problem}), that is a minimisation of (\ref{problem}) on the whole space $\mathbb{T}$. This minimizer will be determined by computing the proximal mapping of the regularisation term. Secondly, after compute the unconstrained minimizer, we project it  over the convex set $\Omega$  by applying  the Tseng’s splitting algorithm  \cite{2bauschke}.

 The framework of the proximal gradient algorithms, to which this work belongs, has been thoroughly investigated under various contexts  with an emphasis on establishing conditions under which these algorithms converge. The advantage of the proximal type methods is in their simplicity. However, they have also been recognized as  slow methods.  The numerical experiments conducted in this paper
provide further  grounds to that claim by showing that, under some choices of 
the operator $\mathscr{F} $  and function $\varphi_{\mu}$, the sequences  produced by the proposed algorithm shares a  very slow  rate of convergence. In order to overcome this drawback, we propose two  accelerated versions   by incorporating  very recent tensor extrapolation methods introduced in \cite{ridwane,e8,jbilou}

The rest of this  work is structured as follows. In  Section \ref{S2}, we provide a review of standard definitions and some useful properties. Moving on to Section \ref{S3}, we introduce a general multidimensional double proximal gradient method using the tensorial form of the objective function in the minimization problem (\ref{problem}). Section \ref{S4} is devoted to discuss some special cases of the problem (\ref{problem}).  In Section \ref{S5}, we suggest enhancing the proposed algorithm's speed by incorporating efficient tensorial extrapolation techniques. Section \ref{S6} is dedicated to applying the proposed algorithm to color image completion, and inpainting. Finally, in Section \ref{S7}, we present our conclusion.

\section{Preliminaries  and notation}
\label{S2}

Tensor algebra  \cite{11cichocki, kolda26, lee28} was developed  to handle higher-dimensional representation and analysis. Maintaining data or operations in their inherent multidimensional format enhances the capacity of systems to accumulate and retain extensive amounts of diverse data, thereby ensuring the accuracy of modeling.\\
Now, let us introduce  some  tensor algebra tools needed in this work. For this, we adopt  notations used  in  \cite{kolda26} .  Lowercase letters is used for presenting vectors, e.g., $x$, and uppercase letters are for matrices, e.g., $X$, while calligraphic letters is devoted for  tensors, e.g., $\mathcal{X}$. We denote by $\mathbb{T}$ the space $\mathbb{R}^{I_{1} \times I_{2} \times \cdots \times I_{N}}$ and  $\mathbb{T}^{N}:=\mathbb{T} \times \mathbb{T} \times \cdots \times \mathbb{T}$. Let $\mathcal{X} \in \mathbb{T}$ be an $N$ th-order tensor of size $I_{1} \times \cdots \times I_{N}$. which entries are denoted by $\mathcal{X}_{i_{1}, i_{2}, \ldots, i_{N}}$ or $\mathcal{X}\left(i_{1}, i_{2}, \ldots, i_{N}\right)$. Zero tensor  is denoted by $\mathcal{O}$ (all its entries equal to zero). We denote by $\mathbf{X}_{(\mathbf{n})}\in \mathbb{R}^{I_{n}\times (I_{1}I_{2}\cdots I_{n-1}I_{n+1}\cdots I_{N})}$  the  n-mode matrix of the tensor $\mathcal{X}$ and we denote by $X_{n}=\mathcal{X}\left(:,:, \ldots, :,n\right)$ the frontal slices  of $\mathcal{X}$  obtained by fixing the last index at $n$, $1\leq n\leq I_N$.  
\begin{definition}
   Let $k \geq 1$ be an integer. The $k$-norm and the infinity norm of the tensor $\mathcal{X}$ are defined by
 
\begin{equation}
    \begin{cases}
     \||\mathcal{X} \||_{k} =& \left(\sum_{i_{1}=1}^{I_{1}} \sum_{i_{2}=1}^{I_{2}} \ldots \sum_{i_{N}=1}^{I_{N}}\left|\mathcal{X}\left(i_{1}, i_{2}, \ldots, i_{N}\right)\right|^{k}\right)^{1 / k}\\
      \||\mathcal{X} \||_{\infty}=&\max _{\substack{1 \leq i_{j} \leq I_{j} \\ 1 \leq j \leq N}}\left|\mathcal{X}\left(i_{1}, i_{2}, \ldots, i_{N}\right)\right|. 
    \end{cases}       
\end{equation}

\end{definition}

\begin{definition}
    Let $\mathcal{X}, \mathcal{Y} \in \mathbb{R}^{I_{1} \times I_{2} \times \cdots \times I_{N}}$. The inner product of $\mathcal{X}$ and $\mathcal{Y}$   is defined by

$$
\langle\mathcal{X} \mid \mathcal{Y}\rangle=\sum_{i_{1}=1}^{I_{1}} \sum_{i_{2}=1}^{I_{2}} \ldots \sum_{i_{N}=1}^{I_{N}} \mathcal{X}\left(i_{1}, i_{2}, \ldots, i_{N}\right) \mathcal{Y}\left(i_{1}, i_{2}, \ldots, i_{N}\right).
$$
It follows immediately that the associated norm is  the Frobenius norm denoted as $\| \mathcal{X} \|_{F}$. 
\end{definition}
\begin{definition} \cite{liu2012tensor32}\label{nuclearnorm}
The tensor nuclear norm $\||\mathcal{X} \||_{*}$ of the  tensor $\mathcal{X}$ is  defined as the  sum of the singular values of the $n$-mode matricization $\mathbf{X}_{(\mathbf{n})}$ of the tensor $\mathcal{X}$ and given by 
$$\||\mathcal{X} \||_{*}=\sum_{n=1}^{N}\| \mathbf{X}_{(\mathbf{n})}\|_{*}, $$
\noindent where $\| \mathbf{X}_{(\mathbf{n})}\|_{*}=\sum_{i=1}^{I_n}\sigma_i(\mathbf{X}_{(\mathbf{n})})$ with $\sigma_i(\mathbf{X}_{(\mathbf{n})})$ stands for the $i$th singular value of the matrix $\mathbf{X}_{(\mathbf{n})}$. 
\end{definition}

\medskip
\noindent Now, Let us recall some optimization notions that will be used later. For more details, see  \cite{parikh40}. Let $g: \mathbb{T} \longrightarrow \mathbb{R} \cup\{\infty\}$ be a closed proper convex function .

\begin{definition}(convex conjugate function)
    The function  $g^{*}$ defined by
    $$
\begin{aligned}
    g^{*}: & \mathbb{T} \longrightarrow \mathbb{R} \\
\mathcal{X} & \longrightarrow g^{*}(\mathcal{X})=\sup _{\mathcal{Y}}\left(\langle\mathcal{Y} \mid \mathcal{\mathcal{X}}\rangle-g(\mathcal{Y})\right)
\end{aligned}$$
\end{definition} 
is called the the convex conjugate function of the function $g$.
\medskip
\begin{proposition} \cite{parikh40}
  Let  $\mathcal{Y} \in \mathbb{T}$, then there exists a unique minimizer $\mathcal{X}$ for the  problem
  $$\underset{\mathcal{X}}{\operatorname{min}}\left(g(\mathcal{X})+\frac{1}{2}\|\mathcal{X}-\mathcal{Y}\|^{2}\right).$$
\end{proposition}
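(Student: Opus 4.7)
The plan is to combine the classical triple of \emph{coercivity}, \emph{lower semicontinuity}, and \emph{strict convexity} to get existence and uniqueness. Set
\[
h(\mathcal{X}) := g(\mathcal{X}) + \frac{1}{2}\|\mathcal{X}-\mathcal{Y}\|_F^{2},
\]
so that the problem is $\min_{\mathcal{X}\in\mathbb{T}} h(\mathcal{X})$, and work on the Hilbert space $(\mathbb{T},\langle\cdot\mid\cdot\rangle)$ introduced above.

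First I would observe that $h$ inherits closedness (lower semicontinuity) and properness from $g$, since the quadratic term $\mathcal{X}\mapsto \tfrac12\|\mathcal{X}-\mathcal{Y}\|_F^2$ is everywhere continuous and real-valued. Convexity of $g$ combined with the \emph{strict} convexity of the quadratic term yields strict convexity of $h$ on its effective domain, which already secures uniqueness: if $\mathcal{X}_1\neq \mathcal{X}_2$ were two minimizers then $h\!\left(\tfrac{\mathcal{X}_1+\mathcal{X}_2}{2}\right)$ would be strictly smaller than the common minimum, a contradiction.

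For existence, the step I expect to require the most care is the \emph{coercivity} of $h$, since $g$ itself is only assumed proper convex and could, in principle, decrease to $-\infty$ faster than a linear rate. The standard workaround is to invoke the fact that a proper lower semicontinuous convex function on a Hilbert space admits a continuous affine minorant: there exist $\mathcal{A}\in\mathbb{T}$ and $\beta\in\mathbb{R}$ with $g(\mathcal{X})\geq \langle \mathcal{A}\mid\mathcal{X}\rangle+\beta$ for every $\mathcal{X}$. Substituting this minorant gives
\[
h(\mathcal{X}) \;\geq\; \langle \mathcal{A}\mid\mathcal{X}\rangle+\beta + \tfrac{1}{2}\|\mathcal{X}-\mathcal{Y}\|_F^{2},
\]
and the right-hand side tends to $+\infty$ as $\|\mathcal{X}\|_F\to\infty$ by completing the square, so $h$ is coercive.

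Finally I would conclude by a Weierstrass-type argument in the Hilbert space $\mathbb{T}$: pick a minimizing sequence $(\mathcal{X}_k)$ for $h$; coercivity implies it is bounded, hence (since $\mathbb{T}$ is finite-dimensional) it admits a convergent subsequence $\mathcal{X}_{k_j}\to \mathcal{X}^{*}$; lower semicontinuity of $h$ then gives
\[
h(\mathcal{X}^{*}) \;\leq\; \liminf_{j\to\infty} h(\mathcal{X}_{k_j}) \;=\; \inf_{\mathcal{X}\in\mathbb{T}} h(\mathcal{X}),
\]
so $\mathcal{X}^{*}$ is a minimizer. Uniqueness was already established, which finishes the proof. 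The only non-routine ingredient is the affine minorization used to derive coercivity; everything else is a direct application of strict convexity and the finite-dimensional Weierstrass theorem.
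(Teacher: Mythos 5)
Your argument is correct. Note, however, that the paper does not prove this proposition at all: it is quoted verbatim from the cited reference \cite{parikh40}, where the standard justification is the one-line observation that $\mathcal{X}\mapsto g(\mathcal{X})+\tfrac12\|\mathcal{X}-\mathcal{Y}\|^2$ is proper, closed and \emph{strongly} convex, and such a function on a Hilbert space attains its infimum at exactly one point. Your route reaches the same conclusion by splitting that one-liner into its elementary ingredients: strict convexity for uniqueness, and coercivity plus the finite-dimensional Weierstrass theorem for existence, with the affine-minorant lemma for proper lsc convex functions correctly identified as the step that prevents $g$ from destroying coercivity. Both arguments are sound; the strong-convexity formulation has the advantage of working verbatim in infinite-dimensional Hilbert spaces (where your extraction of a norm-convergent subsequence from a bounded minimizing sequence would have to be replaced by weak compactness and weak lower semicontinuity), while your version is self-contained and adequate for the finite-dimensional tensor space $\mathbb{T}$ used throughout the paper.
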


\begin{definition} ( proximal operator)\
 The operator  $\operatorname{prox}_{g}$ defined by 
$$
\begin{aligned}
\operatorname{prox}_{g}: & \mathbb{T} \longrightarrow \mathbb{R} \\
\mathcal{Y} & \longrightarrow \operatorname{prox}_{g}(\mathcal{Y})=\underset{\mathcal{X}}{\operatorname{argmin}}\left(g(\mathcal{X})+\frac{1}{2}\|\mathcal{X}-\mathcal{Y}\|^{2}\right),
\end{aligned}
$$
is called the proximal operator of $g$.
\end{definition}

\begin{proposition} \cite{parikh40}
    Let  $\alpha>0$, then 
    $$
\operatorname{prox}_{\alpha g}(\mathcal{U})=\underset{\mathcal{X}}{\operatorname{argmin}}\left(g(\mathcal{X})+\frac{1}{2 \alpha}\|\mathcal{X}-\mathcal{U}\|^{2}\right).
$$ 
The operator $\operatorname{prox}_{\alpha g}$ is  called the proximal operator of $g$ with the parameter $\alpha$.
\end{proposition}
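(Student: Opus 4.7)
The plan is to reduce the statement directly to the definition of the proximal operator stated just above, applied with $g$ replaced by the scaled function $\alpha g$. Since $\alpha>0$ and $g$ is closed, proper, and convex, the function $\alpha g$ inherits those properties, so the preceding proposition guarantees existence and uniqueness of the minimizer and the operator $\operatorname{prox}_{\alpha g}$ is well defined.

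First I would write out the definition of $\operatorname{prox}_{\alpha g}(\mathcal{U})$ by substituting $\alpha g$ into the expression defining the proximal operator, which yields
\[
\operatorname{prox}_{\alpha g}(\mathcal{U})=\underset{\mathcal{X}}{\operatorname{argmin}}\left(\alpha g(\mathcal{X})+\tfrac{1}{2}\|\mathcal{X}-\mathcal{U}\|^{2}\right).
\]
Then I would observe that, because $\alpha>0$, multiplying the objective function by the positive scalar $1/\alpha$ does not alter the location of the minimizer. Dividing through by $\alpha$ transforms the functional into $g(\mathcal{X})+\tfrac{1}{2\alpha}\|\mathcal{X}-\mathcal{U}\|^{2}$, and the argmin is unchanged.

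Combining the two displays gives
\[
\operatorname{prox}_{\alpha g}(\mathcal{U})=\underset{\mathcal{X}}{\operatorname{argmin}}\left(g(\mathcal{X})+\tfrac{1}{2\alpha}\|\mathcal{X}-\mathcal{U}\|^{2}\right),
\]
as claimed. There is essentially no hard step here; the only thing to be careful about is the positivity of $\alpha$, which is needed both to preserve convexity of the scaled term $\alpha g$ (so that the previous proposition yielding a unique minimizer is applicable) and to justify the scaling argument that sends the argmin of $\alpha g(\mathcal{X})+\tfrac{1}{2}\|\mathcal{X}-\mathcal{U}\|^{2}$ to that of $g(\mathcal{X})+\tfrac{1}{2\alpha}\|\mathcal{X}-\mathcal{U}\|^{2}$. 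This will likely be a two- or three-line proof in the paper.
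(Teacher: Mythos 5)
Your argument is correct: the paper states this proposition without proof (it is simply cited from the reference on proximal algorithms), and your rescaling argument --- substituting $\alpha g$ into the definition of $\operatorname{prox}$ and then dividing the objective by the positive constant $\alpha$, which leaves the argmin unchanged --- is exactly the standard justification the paper implicitly relies on. Your remark that $\alpha g$ remains closed, proper, and convex (so the preceding existence--uniqueness proposition applies and the argmin is well defined) is the only point of substance, and you have handled it correctly.
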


\section{Global tensorial double proximal gradient method }\label{S3}

The purpose of this paper is the resolution of the generalized constrained tensorial  minimization problem (\ref{problem}). Let us set
\begin{equation}\label{ff}
\begin{aligned}
f: & \mathbb{T} \longrightarrow \mathbb{R}_{+} \\
\mathcal{X} & \longrightarrow f(\mathcal{X})=\frac{1}{2}\|\mathscr{F}(\mathcal{X})-\mathcal{B}\|_{F}^{2}, 
\end{aligned}
\end{equation}
where $\mathscr{F}$ and $\mathcal{B}$   are those in (\ref{problem}). Then, the  problem (\ref{problem}) is expressed as 
\begin{equation}\label{cnvx}
\min _{\mathcal{X} \in \Omega}(f(\mathcal{X})+\varphi_{\mu}(\mathcal{X})).
\end{equation}
It is clear that  the function $f$ is differentiable, and its gradient is given by
$$
\nabla f(\mathcal{X})=\mathscr{F}^{T}(\mathscr{F}(\mathcal{X})-\mathcal{B}).
$$
\noindent 
The non-differentiability of $\phi$ renders the function $\varphi_{\mu}$ non-differentiable,  increasing the complexity of solving the problem.

\subsection{Global tensorial double proximal gradient algorithm}
In this subsection, we present a noteworthy extension of the gradient descent method tailored for addressing the general tensorial convex minimization problem (\ref{problem}). The literature has seen diverse adaptations of the gradient descent technique to tackle various minimization problems, including nonlinear minimization problems  \cite{13combettes}, fractional optimization problems  \cite{bouhamidi2018conditional}, among others. The proximal gradient method serves as a generalized version of the gradient descent method, particularly adept at handling non-differentiability in the cost function; see $ \cite{2bauschke,bauschke1,21haddad,parikh40}$. \\ First, we consider the  minimization problem

\begin{equation}\label{problem22}
\min _{\mathcal{X} \in \mathbb{T}}(f(\mathcal{X})+\varphi_{\mu}(\mathcal{X})).
\end{equation}
Assume that, at the iteration $k$, we have formed an iterate  tensor $\mathcal{X}_{k}$ approximating the solution to the unconstrained minimization problem (\ref{problem22}). The quadratic approximation of $f$   at the iterated tensor $\mathcal{X}_{k}$, with $\alpha_{k}>0$, is expressed as follows
$$
\Phi_{k}(\mathcal{X})=f\left(\mathcal{X}_{k}\right)+\left\langle\mathcal{X}-\mathcal{X}_{k} \mid \nabla f\left(\mathcal{X}_{k}\right)\right\rangle+\frac{1}{2 \alpha_{k}}\left\|\mathcal{X}-\mathcal{X}_{k}\right\|_{F}^{2},
$$
where  the step size $\left(\alpha_{k}\right)_{k}$  is a positive constant that  depends, as will be shown later, on the Lipschitz constant  of $\nabla f$.
Then,  at each step $k$,  we can approximate the problem (\ref{problem22}) by replacing $f$ by its approximate $\Phi_{k}$. We have then, the following minimization problem
\begin{equation*}
\min _{\mathcal{X}}\left(\varphi_{\mu}(\mathcal{X})+f\left(\mathcal{X}_{k}\right)+\left\langle\mathcal{X}-\mathcal{X}_{k} \mid \nabla f\left(\mathcal{X}_{k}\right)\right\rangle+\frac{1}{2 \alpha_{k}}\left\|\mathcal{X}-\mathcal{X}_{k}\right\|_{F}^{2}\right),
\end{equation*}
we add the constant quantity $\|\alpha_{k} \nabla f\left(\mathcal{X}_{k}\right)\|_{F}^{2}-f\left(\mathcal{X}_{k}\right)$ to the cost function, since that does not change the solution, we obtain
\begin{equation*}
\min _{\mathcal{X}}\left(\varphi_{\mu}(\mathcal{X})+\|\alpha_{k} \nabla f\left(\mathcal{X}_{k}\right)\|_{F}^{2}+\left\langle\mathcal{X}-\mathcal{X}_{k} \mid \nabla f\left(\mathcal{X}_{k}\right)\right\rangle+\frac{1}{2 \alpha_{k}}\left\|\mathcal{X}-\mathcal{X}_{k}\right\|_{F}^{2}\right),
\end{equation*}
leading to 
\begin{equation}\label{Eq4}
\min _{\mathcal{X}}\left(\varphi_{\mu}(\mathcal{X})+\frac{1}{2 \alpha_{k}}\left\|\mathcal{X}-\mathcal{X}_{k}+\alpha_{k} \nabla f\left(\mathcal{X}_{k}\right)\right\|_{F}^{2}\right),
\end{equation}
which has a unique minimizer $\mathcal{Z}_{k}$ defined by  
\begin{equation}\label{minimizer}
\mathcal{Z}_{k}=\operatorname{prox}_{\alpha_{k} \varphi_{\mu}}\left(\mathcal{X}_{k}-\alpha_{k} \nabla f\left(\mathcal{X}_{k}\right)\right), \quad\forall k \in \mathbb{N},
\end{equation}
where the operator $\operatorname{prox}_{\alpha_{k}\varphi_{\mu}} $ stands for the proximal operator of $\varphi_{\mu}$ with the parameter $\alpha_{k}$.Then, as we can see from the above relation ,  two crucial elements are necessary to compute $\mathcal{Z}_{k}$. The first element  is the  constant $\left(\alpha_{k}\right)_{k}$ which will be discussed in   Subsection \ref{sub1}. The  second element is  the  operator $\operatorname{prox}_{\alpha_{k} \varphi_{\mu}}$ which is given in the following proposition. 
\medskip 
\begin{proposition}
For all $\mathcal{Y} \in \mathbb{T}$ and $\alpha>0$, the proximal operator of $\alpha \varphi_{\mu}$ is given by
\begin{equation}\label{Eq3}
\mathcal{Z}=\operatorname{prox}_{\alpha \varphi_{\mu}}(\mathcal{Y})=\mathcal{Y}+\alpha \mathscr{L}^{T}\left(\mathscr{P}_{\ast}\right),
\end{equation}
where $\mathscr{L}^{T}$ is the transpose   of  $\mathscr{L}$ and $\mathscr{P}_{\ast}$ is an optimal solution of
\begin{equation}\label{prim}
    \min _{\mathcal{P}}\left(\phi_{\mu}^{*}(-\mathcal{P})+\left\langle\frac{\alpha}{2} \mathscr{L}\left(\mathscr{L}^{T} (\mathcal{P})\right)+\mathscr{L}(\mathcal{Y} )\mid \mathcal{P}\right\rangle\right),
\end{equation}
with $\phi_{\mu}^{*}$ being the conjugate function of $\phi_{\mu}= \mu\phi$.
\end{proposition}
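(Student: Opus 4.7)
By definition of the proximal operator,
\[
\mathcal{Z}=\operatorname{prox}_{\alpha\varphi_\mu}(\mathcal{Y})=\arg\min_{\mathcal{X}\in\mathbb{T}}\Bigl(\alpha\,\phi_\mu(\mathscr{L}(\mathcal{X}))+\tfrac{1}{2}\|\mathcal{X}-\mathcal{Y}\|_F^2\Bigr).
\]
Because the nonsmoothness of $\varphi_\mu$ is hidden inside the composition $\phi_\mu\circ\mathscr{L}$, my plan is to apply Fenchel--Rockafellar duality by splitting that composition. I would introduce the auxiliary tensor $\mathcal{W}=\mathscr{L}(\mathcal{X})$ to rewrite the problem as the equality-constrained program $\min\{\alpha\,\phi_\mu(\mathcal{W})+\tfrac{1}{2}\|\mathcal{X}-\mathcal{Y}\|_F^2:\mathcal{W}=\mathscr{L}(\mathcal{X})\}$ and form its Lagrangian
\[
L(\mathcal{X},\mathcal{W},\mathcal{Q})=\alpha\,\phi_\mu(\mathcal{W})+\tfrac{1}{2}\|\mathcal{X}-\mathcal{Y}\|_F^2+\langle\mathcal{Q}\mid\mathscr{L}(\mathcal{X})-\mathcal{W}\rangle
\]
with multiplier $\mathcal{Q}\in\mathbb{T}$.

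Next I would carry out the two inner minimizations that define the dual function $g(\mathcal{Q})=\inf_{\mathcal{X},\mathcal{W}}L$. Over $\mathcal{X}$, the objective is a strictly convex quadratic whose stationarity condition $(\mathcal{X}-\mathcal{Y})+\mathscr{L}^T(\mathcal{Q})=\mathcal{O}$ yields the closed form $\mathcal{X}^\ast=\mathcal{Y}-\mathscr{L}^T(\mathcal{Q})$; substituting back and applying the adjoint identity $\|\mathscr{L}^T(\mathcal{Q})\|_F^2=\langle\mathcal{Q}\mid\mathscr{L}(\mathscr{L}^T(\mathcal{Q}))\rangle$ leaves the contribution $-\tfrac{1}{2}\langle\mathcal{Q}\mid\mathscr{L}(\mathscr{L}^T(\mathcal{Q}))\rangle+\langle\mathcal{Q}\mid\mathscr{L}(\mathcal{Y})\rangle$. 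Over $\mathcal{W}$, the definition of the convex conjugate together with its rescaling rule gives $\inf_{\mathcal{W}}\alpha\,\phi_\mu(\mathcal{W})-\langle\mathcal{Q}\mid\mathcal{W}\rangle=-(\alpha\phi_\mu)^\ast(\mathcal{Q})=-\alpha\,\phi_\mu^\ast(\mathcal{Q}/\alpha)$.

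Finally, I would perform the change of variable $\mathcal{P}:=-\mathcal{Q}/\alpha$. Under it, the primal recovery $\mathcal{X}^\ast=\mathcal{Y}-\mathscr{L}^T(\mathcal{Q})$ becomes $\mathcal{Z}=\mathcal{Y}+\alpha\,\mathscr{L}^T(\mathcal{P})$, matching (\ref{Eq3}), and the dual $\max_{\mathcal{Q}}g(\mathcal{Q})$, after multiplication by $-1/\alpha<0$ to convert it to a minimization, should collapse into exactly (\ref{prim}); in particular the $\alpha^2$ factor produced by the quadratic in $\mathcal{Q}$ reduces to the $\alpha/2$ coefficient in front of $\mathscr{L}(\mathscr{L}^T(\mathcal{P}))$. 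Strong duality, which is needed to conclude that $\mathcal{X}^\ast$ evaluated at an optimal $\mathscr{P}_\ast$ coincides with $\mathcal{Z}$, follows from properness, closedness, and convexity of $\phi_\mu$ together with the linearity of the constraint via a standard qualification. The main obstacle I anticipate is the bookkeeping: making the conjugate rescaling $(\alpha\phi_\mu)^\ast(\mathcal{Q})=\alpha\phi_\mu^\ast(\mathcal{Q}/\alpha)$ combine correctly with the sign-and-scale substitution $\mathcal{Q}=-\alpha\mathcal{P}$ so that precisely the coefficient $\alpha/2$ appears in the dual, rather than $\alpha^2/2$ or $1/(2\alpha)$.
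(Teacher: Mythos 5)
Your proposal is correct and follows essentially the same route as the paper: both split the composition $\phi_{\mu}\circ\mathscr{L}$ by introducing an auxiliary variable, pass to the Lagrangian (Fenchel) dual, minimize separately in the two primal blocks, and recover $\mathcal{Z}$ from the stationarity condition in the quadratic block. The only difference is normalization bookkeeping: the paper writes the objective as $\varphi_{\mu}(\mathcal{U})+\tfrac{1}{2\alpha}\|\mathcal{U}-\mathcal{Y}\|_{F}^{2}$ with the multiplier signed so that the dual variable $\mathcal{P}$ appears directly, whereas your scaling requires the substitution $\mathcal{P}=-\mathcal{Q}/\alpha$ together with the conjugate identity $(\alpha\phi_{\mu})^{*}(\mathcal{Q})=\alpha\,\phi_{\mu}^{*}(\mathcal{Q}/\alpha)$, which, as you anticipated, combine to give exactly the coefficient $\alpha/2$ in (\ref{prim}).
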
 
\begin{proof}
    Let $\mathcal{Y} \in  \mathbb{T}$ and $\alpha>0$, then  $\operatorname{prox}_{\alpha} \varphi_{\mu}(\mathcal{Y})$ is the unique solution of the problem
\[
\min _{\mathcal{U}}\left(\varphi_{\mu}(\mathcal{U})+\frac{1}{2 \alpha}\|\mathcal{U}-\mathcal{Y}\|_{F}^{2}\right).
\]
Using   (\ref{phi}) and the fact that  $\varphi_{\mu}(\mathcal{U})=\phi_{\mu}(\mathscr{ L}( \mathcal{U}))$, the above constrained  minimization problem can be reformulated as the following constrained one

\begin{equation}\label{cons}
\min _{\mathcal{U}, \mathcal{V}}\left(\phi_{\mu}(\mathcal{V})+\frac{1}{2 \alpha}\|\mathcal{U}-\mathcal{Y}\|_{F}^{2}\right) \text { subject to }  \mathcal{V}=\mathscr{ L}(\mathcal{U}).
\end{equation}
The associated Lagrangian operator  $\textbf{L}$   is provided  as
\[
\textbf{L}(\mathcal{U}, \mathcal{V}, \mathcal{P})=\phi_{\mu}(\mathcal{V})+\frac{1}{2 \alpha}\|\mathcal{U}-\mathcal{Y}\|_{F}^{2}+\langle\mathcal{P} \mid \mathcal{V}-\mathscr{ L}(\mathcal{U})\rangle,\; \mathcal{P} \in  \mathbb{T}.
\]
Consequently, as demonstrated in   \cite{troltzsch1985glowinski20}, the solution  of the problem (\ref{cons}) corresponds to the saddle point of $\textbf{L}$, representing the solution to the Lagrangian primal problem
\[
\min _{\mathcal{U}, \mathcal{V}} \max _{\mathcal{P}} \textbf{L}(\mathcal{U}, \mathcal{V}, \mathcal{P}).
\]
Due to the separability of the Lagrangian with respect to $\mathcal{U}$ and $\mathcal{V}$, we can interchange the min-max order according to the well known min-max theorem  \cite{15ekeland, 21haddad}. Consequently, the Lagrangian dual problem can be expressed as

\begin{equation}\label{to}
    \max _{\mathcal{P}}\left[\min _{\mathcal{U}}(\underbrace{\frac{1}{2 \alpha}\|\mathcal{U}-\mathcal{Y}\|_{F}^{2}-\langle\mathcal{P} \mid \mathscr{L}(\mathcal{U})\rangle}_{h(\mathcal{U})})+\min _{\mathcal{V}}\left(\phi_{\mu}(\mathcal{V})+\langle\mathcal{P} \mid \mathcal{V}\rangle\right)\right].
\end{equation}
On  one side, due to the convexity and differntiability of the cost function $h$ of the problem in $\mathcal{U}$, its  minimizer denoted by $\mathcal{U}_{\ast}$ is exactly the root of its gradient function. A simple calculation gives  $\mathcal{U}_{\ast}= \mathcal{Y}+\alpha \mathscr{L}^{T} (\mathcal{P})$ with the corresponding optimal value equal to
\[
h(\mathcal{U}_{\ast}) =\frac{1}{2 \alpha}\left\|\mathcal{U}_{\ast}-\mathcal{Y}\right\|_{F}^{2}-\left\langle\mathcal{P} \mid \mathscr{L}( \mathcal{U}_{\ast})\right\rangle 
 =-\left\langle\frac{\alpha}{2} \mathscr{L}(\mathscr{L}^{T} (\mathcal{P}))+\mathscr{L}(\mathcal{Y})\mid \mathcal{P}  \right\rangle .
\]
On the other side , the right minimization problem in $\mathcal{V}$ is a direct application of  the convex conjugate function of $\phi_{\mu}$. In fact,
$$
\min _{\mathcal{V}}\left(\phi_{\mu}(\mathcal{V})+\langle\mathcal{P} \mid \mathcal{V}\rangle\right)=-\max _{\mathcal{V}}\left(\langle-\mathcal{P} \mid \mathcal{V}\rangle-\phi_{\mu}(\mathcal{V})\right)\overset{def}{=}-\phi_{\mu}^{*}(-\mathcal{P}).
$$
 Substituting in (\ref{to}), we  obtain the following subsequent dual problem
$$
\max _{\mathcal{P}}\left[-\phi_{\mu}^{*}(-\mathcal{P})-\left\langle\frac{\alpha}{2} \mathscr{L}\left(\mathscr{L}^{T}( \mathcal{P})\right)+\mathscr{L}(\mathcal{Y}) \mid \mathcal{P}\right\rangle\right],
$$
which is equivalemnt to  the minimization problem (\ref{prim}).
\end{proof}

 \noindent We have shown that  $\operatorname{prox}_{\alpha \varphi_{\mu}}(\mathcal{Y})=\mathcal{Y}+\alpha \mathscr{L}^{T} (\mathscr{P}_{\ast})$, where $\mathscr{P}_{\ast}$ is a minimizer of the problem (\ref{prim}), then, at each step $k$,  to compute the proximal operator of  $\varphi_{\mu}$, we need to solve the minimization problem (\ref{prim}). That is, at the step $k$, if we set
\begin{equation}\label{Eq0}
     \mathcal{Y}_{k}:=\mathcal{X}_{k}-\alpha_{k} \nabla f\left(\mathcal{X}_{k}\right),
\end{equation}
then, we have to solve the problem
\begin{equation}\label{doubl}
    \min _{\mathcal{P}}\left(\phi_{\mu}^{*}(-\mathcal{P})+\left\langle\frac{\alpha_{k}}{2} \mathscr{L}\left(\mathscr{L}^{T} (\mathcal{P})\right)+\mathscr{L}( \mathcal{Y}_{k}) \mid \mathcal{P}\right\rangle\right).
\end{equation}
For the sake of clarity and simplicity, let us consider the operators $\mathscr{J}$ and $\mathscr{N}_{k}$ defined by

$$
\begin{array}{rl}
\mathscr{J}: & \mathbb{T}\longrightarrow \mathbb{R}_{+} \\
& \mathcal{P} \longrightarrow \mathscr{J}(\mathcal{P})=\phi_{\mu}^{*}(-\mathcal{P}), \\
\mathscr{N}_{k} & \mathbb{T} \longrightarrow \mathbb{R} \\
& \mathcal{P} \longrightarrow \mathscr{N}_{k}(\mathcal{P})=\left\langle\frac{\alpha_{k}}{2} \mathscr{L}\left(\mathscr{L}^{T}( \mathcal{P})\right)+\mathscr{L}(\mathcal{Y}_{k}) \mid \mathcal{P}\right\rangle .
\end{array}
$$
The problem (\ref{doubl}) is then formulated as
\begin{equation}\label{doubl1}
    \min _{\mathcal{P}}\left(\mathscr{N}_{k}(\mathcal{P})+\mathscr{J}(\mathcal{P})\right).
\end{equation}
Since  $\mathscr{N}_{k}$  is a closed proper convex  differentiable function, and the functional $\mathscr{J}$ is closed proper and convex, the problem (\ref{doubl1})  shares the same structure and conditions as the  problem (\ref{problem22}), where $\mathscr{N}_{k}$ and $\mathscr{J}$ play  roles analogous to $f$ and $\varphi_{\mu}$ respectively. Consequently, we use  again the same approach (proximal gradient) to address (\ref{doubl1}). Therefore, we can approximate  the solution of  (\ref{doubl1}) via the sequence $\left(\mathcal{P}_{l}\right)_{l}$ defined as
\begin{equation}
    \forall l \in \mathbb{N}, \quad \mathcal{P}_{l+1}=\operatorname{prox}_{\beta_{l} \mathscr{J}}\left(\mathcal{P}_{l}-\beta_{l} \nabla \mathscr{N}_{k}\left(\mathcal{P}_{l}\right)\right),
\end{equation}
with $\beta_{l}>0$ being a step size parameter that depends on the Lipshitz constant of the functional $\mathscr{N}_{k}$. Three crucial elementss are then  necessary to compute the above sequence. The first one is the parameter $\left(\alpha_{k}\right)_{k}$ which will be discussed in Subsection \ref{sub1}. The  second one is the gradient of the differentiable function $\mathscr{N}_{k}$, and finally the  operator $\operatorname{prox}_{\beta_{l}\mathscr{J}}$. For the gradient of $\mathscr{N}_{k}$, it is immediate to see that
\begin{equation}\label{Eq1}
\forall k, \quad \nabla \mathscr{N}_{k}(\mathcal{P})=\alpha_{k} \mathscr{L}\left(\mathscr{L}^{T}( \mathcal{P})\right)+\mathscr{L}(\mathcal{Y}_{k}) .
\end{equation}
\noindent For the proximal operator $\operatorname{prox}_{\beta_{l} \mathscr{J}}$,  let us first recall the following proposition relating the proximal mapping of any proper closed convex function $g$ by their conjugates.
\medskip 
\begin{proposition}  \cite{moreau1965proximite37}
 \begin{equation}\label{proxm}
     \operatorname{prox}_{g}(x)+\operatorname{prox}_{g^{*}}(x)=x, \quad \forall x .
 \end{equation}
\end{proposition}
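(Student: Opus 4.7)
The plan is to establish the identity via the Fenchel--Young subdifferential correspondence between $g$ and its conjugate $g^{*}$, combined with the first-order optimality characterization of the proximal operator. The key algebraic observation is that if we denote $u = \operatorname{prox}_{g}(x)$, then proving the identity amounts to showing that $v := x - u$ coincides with $\operatorname{prox}_{g^{*}}(x)$.

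First, I would apply Fermat's rule to the strongly convex objective defining $\operatorname{prox}_{g}(x)$, namely $g(\mathcal{U}) + \tfrac{1}{2}\|\mathcal{U} - x\|_{F}^{2}$. Since the second summand is differentiable with gradient $\mathcal{U} - x$, the first-order optimality condition for $u = \operatorname{prox}_{g}(x)$ reads
\begin{equation*}
0 \in \partial g(u) + (u - x), \qquad \text{that is,} \qquad x - u \in \partial g(u).
\end{equation*}
At this stage I would invoke the standard Fenchel--Young duality between subdifferentials of a closed proper convex function and its conjugate, namely the equivalence
\begin{equation*}
p \in \partial g(u) \;\Longleftrightarrow\; u \in \partial g^{*}(p),
\end{equation*}
which follows from the equality case in the Fenchel--Young inequality $g(u) + g^{*}(p) \geq \langle u \mid p\rangle$. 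Applied to $p = x - u$, this yields $u \in \partial g^{*}(x-u)$.

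Next I would rewrite this as $x - (x-u) \in \partial g^{*}(x-u)$, which is exactly Fermat's rule for the strongly convex problem $\min_{\mathcal{V}} \bigl( g^{*}(\mathcal{V}) + \tfrac{1}{2}\|\mathcal{V} - x\|_{F}^{2}\bigr)$ evaluated at $\mathcal{V} = x - u$. By uniqueness of the minimizer of a strongly convex problem, this forces $x - u = \operatorname{prox}_{g^{*}}(x)$. Adding $u = \operatorname{prox}_{g}(x)$ to both sides gives the desired identity.

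The only genuine obstacle is the Fenchel--Young subdifferential equivalence, which requires $g$ to be closed, proper, and convex so that $g^{**} = g$; this hypothesis is already in force in the statement. Everything else is a direct application of the optimality condition and the strong convexity of the quadratic perturbation, so the argument is short and self-contained once this duality is accepted as a standard tool from convex analysis (for which a citation to the reference already given in the paper suffices).
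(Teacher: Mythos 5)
Your argument is correct: it is the standard proof of the Moreau decomposition, passing from the optimality condition $x-u\in\partial g(u)$ for $u=\operatorname{prox}_{g}(x)$ through the Fenchel--Young equivalence $p\in\partial g(u)\Leftrightarrow u\in\partial g^{*}(p)$ (valid since $g$ is closed, proper, convex, so $g^{**}=g$) to the optimality condition identifying $x-u$ as $\operatorname{prox}_{g^{*}}(x)$. The paper itself offers no proof of this proposition --- it simply cites Moreau --- so there is nothing to contrast with; your derivation is complete and self-contained.
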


\begin{proposition}
  
For all $\mathcal{P} \in \mathbb{T}$, the proximal mapping of $\beta_{l} \mathscr{J}$  can be expressed as 
\[
\operatorname{prox}_{\beta_{l} \mathscr{J}}(\mathcal{P})=\mathcal{P}+\operatorname{prox}_{\beta_{l} \phi_{\mu}}(-\mathcal{P}), \quad \forall l \in \mathbb{N}.
\]
\end{proposition}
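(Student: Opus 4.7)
The plan is to combine the variational definition of the proximal operator with a sign-flip change of variables, and then invoke Moreau's identity (\ref{proxm}) to interchange $\phi_\mu^{*}$ and $\phi_\mu$. First, I would unfold the left-hand side using the definition of $\mathscr{J}$ and of $\operatorname{prox}$ with parameter $\beta_l$, writing
\[
\operatorname{prox}_{\beta_l \mathscr{J}}(\mathcal{P}) = \underset{\mathcal{Z}}{\arg\min}\left(\beta_l\,\phi_\mu^{*}(-\mathcal{Z}) + \tfrac{1}{2}\|\mathcal{Z}-\mathcal{P}\|_F^{2}\right).
\]
The substitution $\mathcal{W}=-\mathcal{Z}$ leaves the quadratic invariant in form because the Frobenius norm is reflection-invariant, so the objective becomes $\beta_l\,\phi_\mu^{*}(\mathcal{W}) + \tfrac{1}{2}\|\mathcal{W}+\mathcal{P}\|_F^{2}$. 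The resulting argmin is exactly $\operatorname{prox}_{\beta_l\phi_\mu^{*}}(-\mathcal{P})$, so, carrying back the sign from $\mathcal{Z}=-\mathcal{W}$, I obtain the intermediate identity
\[
\operatorname{prox}_{\beta_l \mathscr{J}}(\mathcal{P}) = -\operatorname{prox}_{\beta_l \phi_\mu^{*}}(-\mathcal{P}).
\]

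Next, I would apply Moreau's identity (\ref{proxm}) at the point $-\mathcal{P}$, in which the roles of $g$ and $g^{*}$ are played by $\beta_l\phi_\mu^{*}$ and $\beta_l\phi_\mu$ respectively; this uses the Fenchel--Moreau biconjugate theorem, which gives $\phi_\mu^{**}=\phi_\mu$ since $\phi_\mu$ is a closed proper convex function. Moreau's identity then yields
\[
\operatorname{prox}_{\beta_l \phi_\mu^{*}}(-\mathcal{P}) + \operatorname{prox}_{\beta_l \phi_\mu}(-\mathcal{P}) = -\mathcal{P},
\]
and solving for $\operatorname{prox}_{\beta_l \phi_\mu^{*}}(-\mathcal{P})$ and substituting into the preceding intermediate identity delivers the claimed formula.

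The main delicate point is the interaction of the scaling factor $\beta_l$ with the Moreau identity, since in strict generality the Legendre conjugate of $\beta_l\phi_\mu^{*}$ is not $\beta_l\phi_\mu$ but a rescaled version of it, and the properly scaled form of Moreau's decomposition must be invoked. I would therefore take care to verify that, under the convention adopted in Section~\ref{S2} for the proximal operator with parameter $\beta_l$, the scalings induced by the change of variables $\mathcal{W}=-\mathcal{Z}$ and by the conjugation step propagate consistently so that the right-hand side collapses exactly to $\mathcal{P}+\operatorname{prox}_{\beta_l\phi_\mu}(-\mathcal{P})$, with no residual dilation of the argument.
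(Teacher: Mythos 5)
Your argument is exactly the paper's: unfold $\operatorname{prox}_{\beta_l\mathscr{J}}$ from its variational definition, substitute $\mathcal{W}=-\mathcal{Z}$ to obtain the intermediate identity $\operatorname{prox}_{\beta_l\mathscr{J}}(\mathcal{P})=-\operatorname{prox}_{\beta_l\phi_\mu^{*}}(-\mathcal{P})$, and conclude via Moreau's identity (\ref{proxm}) evaluated at $-\mathcal{P}$. The $\beta_l$-scaling caveat you raise at the end is the one genuinely delicate point: the paper invokes (\ref{proxm}) as if the conjugate of $\beta_l\phi_\mu$ were $\beta_l\phi_\mu^{*}$, whereas the properly scaled Moreau decomposition introduces a dilation of the argument, so your closing verification is not a formality but precisely the step on which the stated formula (for $\beta_l\neq 1$) hinges.
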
 
\begin{proof} 
 For any $\mathcal{P} \in \mathbb{T}$, we have
\[
\begin{aligned}
\operatorname{prox}_{\beta_{l} \mathscr{J}}(\mathcal{P}) & =\underset{\mathcal{W}}{\operatorname{argmin}}\left(\mathscr{J}(\mathcal{W})+\frac{1}{2 \beta_{l}}\|\mathcal{W}-\mathcal{P}\|_{F}^{2}\right), \\
& =\underset{\mathcal{W}}{\operatorname{argmin}}\left(\phi_{\mu}^{*}(-\mathcal{W})+\frac{1}{2 \beta_{l}}\|\mathcal{W}-\mathcal{P}\|_{F}^{2}\right), \\
& =-\underset{\mathcal{V}}{\operatorname{argmin}}\left(\phi_{\mu}^{*}(\mathcal{V})+\frac{1}{2 \beta_{l}}\|\mathcal{V}+\mathcal{P}\|_{F}^{2}\right), \\
& =-\operatorname{prox}_{\beta_{l} \phi_{\mu}^{*}}(-\mathcal{P}), \\
&=-\mathcal{P}-\operatorname{prox}_{\beta_{l}\phi_{\mu}}(\mathcal{-P}) \quad\quad(\text{according to (\ref{proxm}) }).
\end{aligned}
\]
\end{proof}

\noindent Furthermore, since $\phi_{\mu}=\mu \phi$, the proximal operator $\operatorname{prox}_{\beta_{l} \phi_{\mu}}$ is equal to the proximal operator of the function $(\beta_{l} \mu)\phi$. That is  $\operatorname{prox}_{\beta_{l} \phi_{\mu}}=\operatorname{prox}_{(\beta_{l} \mu)\phi}$ 
 which is  a direct result of the proximal operator of the function $\phi$.
 
\noindent Suppose now that the proximal operator of the function $\phi$ is known. At iteration $k$, the steps of the algorithm computing the approximate solution  $\mathcal{Z}_{k}$ in (\ref{minimizer}) of the   problem (\ref{Eq4}) can be  summarized as 
 illustrated in Algorithm \ref{alg1}.

\begin{algorithm}[h]
\caption{Approximate minimizer $\mathcal{Z}_{k}$.}\label{alg1}
\begin{algorithmic}[1]
\REQUIRE $l_k\in \mathbb{N}$, $\mathcal{X}_{k}$ ,$\mu$, $\alpha_k,  , \beta_1,..., \beta_{l_k}$

\STATE  Compute the tensors $\mathcal{Y}_{k}$  via (\ref{Eq0}).
\STATE Compute the operator $\mathscr{L}(\mathscr{N}_{k})$ via (\ref{Eq1}). 
 \FOR{$l=1:l_k$} 
\STATE $\mathcal{R}_l=\mathcal{P}_{l}-\beta_{l} \mathscr{L}( \mathscr{N}_{k})\left(\mathcal{P}_{l}\right)$,
\STATE   $ \mathcal{P}_{l+1}=\mathcal{R}_l+\operatorname{prox}_{\beta_{l} \mu\phi}(-\mathcal{R}_l)$, 
\ENDFOR
\STATE Compute $\mathcal{Z}_k$ by   $\mathcal{Z}_k=\mathcal{Y}_k+\alpha_k \mathscr{L}^{T}\left(\mathcal{P}_{l_k +1}\right)$.
\end{algorithmic}
\end{algorithm}

\noindent Notice  that the step size scalar $\left(\alpha_{k}\right)_{k}$ depends on the Lipschitz constant $L(\nabla f)$ while the scalars $\beta_{l}$, $l=1,\ldots,l_k$  are related to $L\left(\nabla \mathscr{N}_{k}\right)$. We will discuss their update process later.
\subsection{Approximate unconstrained solution via Tseng's splitting algorithm}
%
The question now is how about the solution of the constrained problem (\ref{cnvx})? In other words, how can we use   $\mathcal{Z}_{k}$ solution of the  problem  (\ref{problem2}) for getting a solution of the problem (\ref{cnvx}) under the
nonempty closed and convex constraint $\Omega$. The Tseng's splitting algorithm proposed in  \cite{2bauschke} is a useful and common tool that provides a response to our question. From the solution  $\mathcal{Z}_{k}$, this algorithm uses, at step $k$, the projection orthogonal onto the set 
$\Omega$ to produce an approximate solution $\mathcal{X}_{k+1}$ of $(\ref{cnvx})$ as follows 
\begin {equation}
\left\{\begin{array}{l}
\mathcal{Y}_{k} \quad \text{and} \quad \mathcal{Z}_{k} \quad \text{provided by Algorithm \ref{alg1} }, \\
\mathcal{Q}_{k}=\mathcal{Z}_{k}-\alpha_{k} \nabla f\left(\mathcal{Z}_{k}\right), \\
\mathcal{X}_{k+1}=\Pi_{\Omega}\left(\mathcal{X}_{k}-\mathcal{Y}_{k}+\mathcal{Q}_{k}\right).
\end{array}\right.
\end{equation}
where $\Pi_{\Omega}$ stands for the orthogonal projection onto $\Omega$.

\subsection{Selection of parameters $\alpha_{k}$ and $\beta_{l}$}\label{sub1}

Selecting the step size parameters $\alpha_{k}$ and $\beta_{l}$ is regarded as a standard condition that guarantees the convergence of the sequence $\left(\mathcal{X}_{k}\right)_{k}$ to the minimizer of the problem (\ref{cnvx}).  In   \cite{2bauschke,13combettes}, the authors provide sufficient conditions for this purpose. It is required that the positive values  $\alpha_{k}$ and $\beta_{l}$  should  be less than   the inverses of Lipschitz constants of the operators $\nabla f$ and $\nabla \mathscr{N}_{k}$, denoted by $L(\nabla f)$ and $L\left(\nabla \mathscr{N}_{k}\right)$,  respectively. That is, $0<\alpha_{k}<  \frac{1}{L(\nabla f)}$ and $0<\beta_{k}<\frac{1}{L\left(\nabla \mathscr{N}_{k}\right)}$.

\noindent For any pair $(\mathcal{X}, \mathcal{Y})$ in $\mathbb{T} \times \mathbb{T}$, we have

$$
\begin{aligned}
\|\nabla f(\mathcal{X})-\nabla f(\mathcal{Y})\|_{F} & =2\left\|\mathscr{F}^{T}(\mathscr{F}(\mathcal{X}))-\mathscr{F}^{T}(\mathscr{F}(\mathcal{Y}))\right\|_{F}, \\
& =2\left\|\mathscr{F}^{T}(\mathscr{F}(\mathcal{X}-\mathcal{Y}))\right\|_{F}, \\
& \leq 2\|| \mathscr{F}^{T} \circ \mathscr{F}\|| \cdot\| \mathcal{X}-\mathcal{Y} \|_{F},
\end{aligned}
$$
where $\circ$ refer to the composition operation. Then, we can choose the quantity $2\||\mathscr{F}^{T} \circ \mathscr{F} \||$ as a Lipschitz constant of the operator $\nabla f$.

\noindent As a result, the step size  $\alpha_{k}$ can be chosen as a stable value $\alpha_{k} \in$ $\left(0, \frac{1}{2\left|\|\mathscr{F}^{T} \circ \mathscr{F} \|\right|}\right)$.
In the scenario involving the operator $\nabla \mathscr{N}_{k}$ for a given step $k$, the Lipschitz constant $L\left(\nabla \mathscr{N}_{k}\right)$ is unknown. In such cases, the step sizes $\left(\beta_{l}\right)$ can be determined using a line search method  \cite{parikh40}. This entails applying the proximal gradient method with a straightforward backtracking step size rule, as follows
\begin{equation}\label{betal}
    \forall l \in \mathbb{N}, \quad \beta_{l}=\rho \beta_{l-1}.
\end{equation}
\noindent The main steps for the computation of the approximation  $\mathcal{X}_{k+1}$ of the problem (\ref{cnvx}) is given in the following algorithm.

\begin{algorithm}[h]
\caption{Global Tensorial  Proximal Gradient (GTPG)}\label{alg2}
\begin{algorithmic}[1]
\REQUIRE $\bar l\in \mathbb{N}$, $\mu,$ 
 $\mathcal{X}_{1}$, $\mathcal{P}_{1}$, $\nabla f$, $\operatorname{prox}_{\phi}$, $ , \beta_0,\rho$ and  $\alpha \in(0, \frac{1}{2\left|\|\mathscr{F}^{T} \circ \mathscr{F} \|\right|})$
\FOR{$k=1...$ until convergence} 
\STATE  Compute the tensor $\mathcal{Y}_{k}$  via (\ref{Eq0})
\STATE Compute the operator $\mathscr{L}(\mathscr{N}_{k})$ via (\ref{Eq1})
 \FOR{$l=1:\bar l$} 
 \STATE Update the  parameter $\beta_{l}$ via (\ref{betal})
\STATE $\mathcal{R}_l=\mathcal{P}_{l}-\beta_{l} \mathscr{L}( \mathscr{N}_{k}\left(\mathcal{P}_{l}\right))$
\STATE   $ \mathcal{P}_{l+1}=\mathcal{R}_l+\operatorname{prox}_{\beta_{l}\mu \phi}(-\mathcal{R}_l),$ 
\ENDFOR
\STATE Compute $\mathcal{Z}_k$ via:  $\mathcal{Z}_k=\mathcal{Y}_k+\alpha\mathscr{L}^{T}\left(\mathcal{P}_{l_k +1}\right)$
\STATE $\mathcal{Q}_{k}=\mathcal{Z}_{k}-\alpha \nabla f\left(\mathcal{Z}_{k}\right) $
\STATE $\mathcal{X}_{k+1}=\Pi_{\Omega}\left(\mathcal{X}_{k}-\mathcal{Y}_{k}+\mathcal{Q}_{k}\right)$
\ENDFOR
\end{algorithmic}
\end{algorithm}

\noindent The following theorem gives a convergence result on the consructed approximate sequence $\left( \mathcal{X}_{k}  \right )_k$ produced by Algorithm \ref{alg2}.
\medskip 
\begin{theorem}
    Let $\Omega$ be  a closed subset of $\mathbb{T}$. Then, the sequence $\left(\mathcal{X}_{k}\right)_{k}$ generated by Algorithm (\ref{alg2})  converges to  the unique solution, denoted by $\mathcal{X}_{*}$, of the problem (\ref{cnvx}) (or equivalently (\ref{problem}). That is, $\displaystyle \lim_{k \longrightarrow \infty}\left\|\mathcal{X}_{k}-\mathcal{X}_{*}\right\|_{F} =0$ .
\end{theorem}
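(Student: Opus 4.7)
The plan is to interpret Algorithm \ref{alg2} as Tseng's forward--backward--forward (FBF) splitting method applied to the monotone inclusion
\begin{equation*}
\mathcal{O} \in \nabla f(\mathcal{X}) + \partial\varphi_{\mu}(\mathcal{X}) + N_{\Omega}(\mathcal{X}),
\end{equation*}
which is the first-order optimality condition for (\ref{cnvx}), where $N_{\Omega}$ denotes the normal cone to $\Omega$. A preparatory manipulation substitutes the definitions of $\mathcal{Y}_{k}$ and $\mathcal{Q}_{k}$ into the update for $\mathcal{X}_{k+1}$ to produce the equivalent compact form
\begin{equation*}
\mathcal{X}_{k+1} = \Pi_{\Omega}\bigl(\mathcal{Z}_{k} - \alpha(\nabla f(\mathcal{Z}_{k}) - \nabla f(\mathcal{X}_{k}))\bigr),
\end{equation*}
which exposes the FBF structure: $\mathcal{Z}_{k}$ plays the role of a forward--backward iterate and the correction $-\alpha(\nabla f(\mathcal{Z}_{k})-\nabla f(\mathcal{X}_{k}))$ restores the non-expansiveness lost by the explicit step on $\nabla f$.

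Next, I would establish Fejér monotonicity of $(\mathcal{X}_{k})_{k}$ with respect to $\mathcal{X}_{*}$. Combining non-expansiveness of $\Pi_{\Omega}$ (legitimate since $\mathcal{X}_{*}\in\Omega$, so $\Pi_{\Omega}(\mathcal{X}_{*})=\mathcal{X}_{*}$), the subgradient inequality associated with the proximal characterization $\mathcal{Z}_{k}=\operatorname{prox}_{\alpha\varphi_{\mu}}(\mathcal{X}_{k}-\alpha\nabla f(\mathcal{X}_{k}))$, and the Lipschitz bound $L=2\||\mathscr{F}^{T}\circ\mathscr{F}\||$ on $\nabla f$ derived in Subsection \ref{sub1}, one obtains an inequality of the form
\begin{equation*}
\|\mathcal{X}_{k+1}-\mathcal{X}_{*}\|_{F}^{2} \leq \|\mathcal{X}_{k}-\mathcal{X}_{*}\|_{F}^{2} - (1-\alpha^{2}L^{2})\|\mathcal{X}_{k}-\mathcal{Z}_{k}\|_{F}^{2}.
\end{equation*}
Since $\alpha\in(0,1/L)$, the coefficient $1-\alpha^{2}L^{2}$ is strictly positive; telescoping then yields boundedness of $(\mathcal{X}_{k})$, square-summability of $\|\mathcal{X}_{k}-\mathcal{Z}_{k}\|_{F}$, and the asymptotic regularity $\|\mathcal{X}_{k+1}-\mathcal{X}_{k}\|_{F}\to 0$.

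Convergence is then closed out in the standard way. Boundedness together with finite-dimensionality of $\mathbb{T}$ provides a convergent subsequence $\mathcal{X}_{k_{j}}\to\bar{\mathcal{X}}\in\Omega$; passing to the limit in the proximal characterization of $\mathcal{Z}_{k}$ and in the projection step, using continuity of $\nabla f$, closedness of the graph of $\partial\varphi_{\mu}$, and $\mathcal{X}_{k}-\mathcal{Z}_{k}\to\mathcal{O}$, one verifies that $\bar{\mathcal{X}}$ satisfies the optimality inclusion displayed above. The uniqueness of the minimizer of (\ref{cnvx}), recalled after (\ref{problem}) in the Introduction, forces $\bar{\mathcal{X}}=\mathcal{X}_{*}$, and Fejér monotonicity upgrades subsequential convergence to full convergence $\|\mathcal{X}_{k}-\mathcal{X}_{*}\|_{F}\to 0$.

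The principal obstacle I foresee is that Algorithm \ref{alg2} computes $\mathcal{Z}_{k}$ only approximately through the finite inner loop of length $\bar l$, rather than as the exact proximal point; consequently, the idealised FBF analysis must be supplemented by control of the inner-loop error. The cleanest approach is either to argue that the linear convergence of the inner proximal-gradient iteration makes the inner error summable across outer iterations (so that it can be absorbed as a perturbation in the Fejér inequality), or to invoke directly an inexact-proximal-gradient framework. A secondary technical point is checking that the projected variant of FBF --- with $\Pi_{\Omega}$ applied \emph{after} the correction step rather than being encoded via the backward operator --- converges to a minimizer of the constrained problem (\ref{cnvx}) rather than merely to an unconstrained critical point that happens to lie in $\Omega$; this amounts to verifying that the normal-cone contribution in the optimality inclusion is correctly captured by the projection at fixed points of the iteration.
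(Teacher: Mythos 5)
Your route is essentially the paper's route: the paper's entire proof consists of observing that $f$ and $\varphi_{\mu}$ are proper, lower semicontinuous and convex, that $f$ is differentiable, and that $\Omega$ is closed, convex and nonempty, and then invoking the general convergence theorem for Tseng's splitting from \cite{2bauschke}. What you have written out --- the forward--backward--forward reformulation $\mathcal{X}_{k+1}=\Pi_{\Omega}\bigl(\mathcal{Z}_{k}-\alpha(\nabla f(\mathcal{Z}_{k})-\nabla f(\mathcal{X}_{k}))\bigr)$, the Fej\'er inequality with coefficient $1-\alpha^{2}L^{2}$, asymptotic regularity, subsequential convergence plus uniqueness --- is precisely the proof of that cited theorem, so you have reconstructed in detail what the paper delegates to the reference. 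Two of the caveats you raise are genuine and are \emph{not} addressed by the paper's one-line proof either: first, Algorithm \ref{alg2} computes $\mathcal{Z}_{k}$ only through a finite inner loop of length $\bar l$, so the exact-prox hypothesis of the cited theorem is not literally satisfied and some summable-error or inexact-prox argument is needed; second, in Tseng's theorem the projection onto a closed convex set yields convergence to a zero of $\nabla f+\partial\varphi_{\mu}$ \emph{lying in} $\Omega$ (assuming such a zero exists), which coincides with the minimizer of the constrained problem (\ref{cnvx}) only under that assumption --- the inclusion with the normal cone $N_{\Omega}$ that you write down is not automatically what the projected iteration targets. Your proposal is therefore correct in approach and more honest than the paper about where the argument still needs work; resolving those two points would make it a complete proof rather than a citation.
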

\medskip
\begin{proof}
The functions $f$ and $\varphi_{\mu}$ are proper lower semicontinuous and convex, with $f$ being Gateau differentiable and uniformly convex on $\mathbb{T}$ and since the set $\Omega$ is a closed convex nonempty subset of $\mathbb{T}$, the strong convergence of the sequence $\left(\mathcal{X}_{k}\right)_{k}$  follows immediately from the general result  given in  \cite{2bauschke}.
\end{proof} 

\section{Special cases}\label{S4}
\subsection{Case $1$.  $\mathcal{L}=id_\mathbb{T}$ and $\phi =\|.\|_{1}$}
 The problem (\ref{problem}) becomes 
 \begin{equation}\label{problem1}
\min _{\mathcal{X} \in \Omega}\left(\frac{1}{2}\|\mathscr{F}(\mathcal{X})-\mathcal{B}\|_{F}^{2}+ \|\mathcal{X}\|_{1}\right),
\end{equation}
 where $\|\mathcal{X} \|_{1} = \sum_{i_{1}=1}^{I_{1}} \sum_{i_{2}=1}^{I_{2}} \ldots \sum_{i_{N}=1}^{I_{N}}\left|\mathcal{X}\left(i_{1}, i_{2}, \ldots, i_{N}\right)\right|$.
This problem (\ref{problem1}) is a direct extension, to tensor case, of the popular (vector) \textit{l$_1$-regularization} problems \cite{ista1,ista,ista2}. 
Furthermore, the tensorial proximal operator $\operatorname{prox}_{\beta_{l}\mu \phi}:=\operatorname{prox}_{\beta_{l} \mu }\|.\|_{1}$  can be seen as a direct extension of the soft thresholding operator  \cite{parikh40}. Then, similarly to  the vector case, $\operatorname{prox}_{\gamma } \|.\|_{1}$ with any $\gamma>0$, can be computed via the orthogonal projection on the $\| . \|_{\infty}$-unit ball; see  \cite{parikh40} for more details.  This leads to

$$
\left(\operatorname{prox}_{\gamma\|.\|_{1}.}(\mathcal{P})\right)_{i_{1}, \ldots, i_{N}}= \begin{cases}\mathcal{P}_{i_{1}, \ldots, i_{N}}-\gamma, & \mathcal{P}_{i_{1}, \ldots, i_{N}} \geq \gamma \\ 0, & \left|\mathcal{P}_{i_{1}, \ldots, i_{N}}\right|<\gamma \\ \mathcal{P}_{i_{1}, \ldots, i_{N}}+\gamma, & \mathcal{P}_{i_{1}, \ldots, i_{N}} \leq-\gamma\end{cases}
$$
Under these choices, the function $\mathscr{N}_k$, and hence its gradient  are given as
\[\mathscr{N}_{k}(\mathcal{P})=\left\langle\frac{\alpha_{k}}{2} \mathcal{P}+\mathcal{Y}_{k} \mid \mathcal{P}\right\rangle \quad \text{and} \quad \nabla \mathscr{N}_{k}(\mathcal{P})=\frac{\alpha_{k}}{2} \mathcal{P}+\mathcal{Y}_{k}. \]
\noindent  The  algorithm \ref{alg2}  is then recovered as a natural extension to tensor case of  the well known   \textit{iterative shrinkage-thresholding algorithms} (ISTA)  devoted to solve  vector \textit{l$_1$-regularization} problems, see  \cite{ista1,ista,ista2} . This leads to the following new algorithm  named Tensorial Iterative Shrinkage-Thresholding Algorithm (TISTA).
\begin{algorithm}[h]
\caption{Tensorial Iterative Shrinkage-Thresholding Algorithm (TISTA)}\label{alg3}
\begin{algorithmic}[1]
\REQUIRE $\bar l\in \mathbb{N}$, $\mu$
 $\mathcal{X}_{1}$, $\mathcal{P}_{1}$, $\nabla f$, $\operatorname{prox}_{\phi}$, $ \beta_0,\rho$  and  $\alpha \in(0, \frac{1}{2\left|\|\mathscr{F}^{T} \circ \mathscr{F} \|\right|})$,
\FOR{$k=1...$ until convergence} 
\STATE  $\mathcal{Y}_{k}=\mathcal{X}_{k}-\alpha \nabla f\left(\mathcal{X}_{k}\right)$,
 \FOR{$l=1:\bar l$} 
 \STATE Update the  parameter $\beta_{l}$ via (\ref{betal}):$\beta_{l}=\rho \beta_{l-1}$,
\STATE $\mathcal{R}_l=\mathcal{P}_{l}- \frac{\alpha\beta_{l}}{2} \mathcal{P}_{l}+\mathcal{Y}_{k} $,
\STATE   $ \mathcal{P}_{l+1}=\mathcal{R}_l+\operatorname{prox}_{\beta_{l}\mu\left\||\cdot\||_{1}\right.}(\mathcal{R}_l)$ .
\ENDFOR
\STATE Compute $\mathcal{Z}_k$ via:  $\mathcal{Z}_k=\mathcal{Y}_k+\alpha \mathcal{P}_{l_k +1}$,
\STATE $\mathcal{Q}_{k}=\mathcal{Z}_{k}-\alpha \nabla f\left(\mathcal{Z}_{k}\right) $,
\STATE $\mathcal{X}_{k+1}=\Pi_{\Omega}\left(\mathcal{X}_{k}-\mathcal{Y}_{k}+\mathcal{Q}_{k}\right)$.
\ENDFOR
\end{algorithmic}
\end{algorithm}

\noindent Assume that  the function $f$ in equation (\ref{ff}) is such that
\begin{equation}
\begin{aligned}
f: & \mathbb{T} \longrightarrow \mathbb{R}_{+} \\
\mathcal{X} & \longrightarrow f(\mathcal{X})=\frac{1}{2}\|\mathscr{A}*_N \mathcal{X}-\mathcal{B}\|_{F}^{2}, 
\end{aligned}
\end{equation}
where $\mathscr{A}\in \mathbb{T}^2$ is an $2Nth$-order tensor of size $I_{1} \times I_{2} \times \cdots \times I_{N}\times I_{1} \times I_{2} \times \cdots \times I_{N}$ and $*_N$ stands for the Einstein product operation  \cite{jbilou}, where $\mathscr{A}*_N \mathcal{X}$ is the  $I_{1} \times \ldots \times I_{N}$ tensor which entries are given by
	\begin{equation}\label{Einstien}
		\left(\mathcal{A} *_{N} \mathcal{X}\right)_{i_{1}, \ldots i_{N} }=\sum_{j_{1} \ldots j_{N}} \mathcal{A}_{i_{1}, \ldots i_{N} j_{1} \ldots j_{N}} \mathcal{X}_{j_{1} \ldots j_{N} }.
	\end{equation}
Through a simple calculation, we get  $\nabla f(\mathcal{X})=\mathscr{A}*_N \mathcal{X}-\mathcal{B} $. Substituting in Algorithm \ref{alg3}, we obtain a new algorithm the  called Einstein Tensorial Iterative Shrinkage-Thresholding Algorithm (EISTA) and is sunmmarized as follows.
\begin{equation}\label{pr}
\min _{\mathcal{X} \in \Omega}\left(\frac{1}{2}\|\mathscr{A}*_N \mathcal{X}-\mathcal{B}\|_{F}^{2}+ \||\mathcal{X}\||_{1}\right).
\end{equation}
\begin{algorithm}[h]
\caption{EISTA (Einstein product based ISTA)}\label{alg4}
\begin{algorithmic}[1]
\REQUIRE $\bar l\in \mathbb{N}$, $\mu$, $\mathcal{A}$, $\mathcal{B}$
 $\mathcal{X}_{1}$, $\mathcal{P}_{1}$, $\operatorname{prox}_{\phi}$, $ \beta_0,\rho$ and $\alpha \in(0, \frac{1}{2\|\mathscr{A}*_N \mathscr{A}\|})$.
\FOR{$k=1...$ until convergence} 
\STATE  $\mathcal{Y}_{k}=\mathcal{X}_{k}-\alpha(\mathscr{A}*_N \mathcal{X}_k-\mathcal{B})$.
 \FOR{$l=1:\bar l$} 
 \STATE Update the  parameter $\beta_{l}$ via (\ref{betal}):$\beta_{l}=\rho \beta_{l-1}$
\STATE $\mathcal{R}_l=\mathcal{P}_{l}- \frac{\alpha\beta_{l}}{2} \mathcal{P}_{l}+\mathcal{Y}_{k}$,
\STATE $\mathcal{P}_{l+1}=\mathcal{R}_l+\operatorname{prox}_{\beta_{l}\mu\left\||\cdot\||_{1}\right.}(\mathcal{R}_l)$.
\ENDFOR
\STATE Compute $\mathcal{Z}_k$ via:  $\mathcal{Z}_k=\mathcal{Y}_k+\alpha_k \mathcal{P}_{l_k +1}$,
\STATE $\mathcal{Q}_{k}=\mathcal{Z}_{k}-\alpha_{k} (\mathscr{A}*_N \mathcal{Z}_k-\mathcal{B}) $,
\STATE $\mathcal{X}_{k+1}=\Pi_{\Omega}\left(\mathcal{X}_{k}-\mathcal{Y}_{k}+\mathcal{Q}_{k}\right)$.
\ENDFOR
\end{algorithmic}
\end{algorithm}

\subsection{Case $2$ $\mathcal{L}=\nabla$ and $\phi =\||.\||_{1}$}
The problem (\ref{problem}) becomes
\begin{equation}\label{problem2}
\min_{\mathcal{X} \in \Omega}\left(\frac{1}{2}\|\mathscr{F}(\mathcal{X})-\mathcal{B}\|_{F}^{2}+\mu \||\nabla(\mathcal{X})\||_{1}\right),
\end{equation}
where the gradient operator $\nabla (\mathcal{X})$ of the  $N$-order tensor $\mathcal{X} \in \mathbb{T}$ is defined as a column block tensor in $\mathbb{T}^{N}$ consisting of the partial derivatives $\left(\nabla_{(n)} \mathcal{X}\right)_{n}$, i.e., $\nabla \mathcal{X}=$ $\left(\nabla_{(1)} \mathcal{X}, \ldots, \nabla_{(N)} \mathcal{X}\right)$, such that, for $n=1, \ldots, N$, the block tensor $\nabla_{(n)} \mathcal{X}$ is given by
$$
\left(\nabla_{(n)} \mathcal{X}\right)_{i_{1}, \ldots, i_{N}}= \begin{cases}\mathcal{X}_{i_{1}, \ldots, i_{n}+1, \ldots, i_{N}}-\mathcal{X}_{i_{1}, \ldots, i_{n}, \ldots, i_{N}} & \text { if } i_{n}<I_{n} \\ 0 & \text { if } i_{n}=I_{n}.\end{cases}
$$
The problem (\ref{problem2}) was considered in   \cite{benc}, resulting the  algorithm named  TDPG  and given as follows.
\begin{algorithm}[h]
\caption{[ \cite{benc}] Tensorial Double Proximal Gradient (TDPG) }\label{TDPG}
\begin{algorithmic}[1]
\REQUIRE $\bar l\in \mathbb{N}$, $\mu$
 $\mathcal{X}_{1}$, $\mathcal{P}_{1}$, $\nabla f$, $\operatorname{prox}_{\phi}$, $ \beta_0,\rho$ and $\alpha \in(0, \frac{1}{2\left|\|\mathscr{F}^{T} \circ \mathscr{F} \|\right|})$
\FOR{$k=1...$ until convergence} 
\STATE  Compute the tensors $\mathcal{Y}_{k}$  via (\ref{Eq0})
\STATE Compute the operator $\nabla\mathscr{N}_{k}$ via (\ref{Eq1})
 \FOR{$l=1:\bar l$} 
 \STATE Update the  parameter $\beta_{l}$ via (\ref{betal})
\STATE $\mathcal{R}_l=\mathcal{P}_{l}-\beta_{l} \nabla \mathscr{N}_{k}\left(\mathcal{P}_{l}\right)$
\STATE   $ \mathcal{P}_{l+1}=\mathcal{R}_l+\operatorname{prox}_{\beta_{l}\mu \phi}(-\mathcal{R}_l),$ 
\ENDFOR
\STATE Compute $\mathcal{Z}_k$ via:  $\mathcal{Z}_k=\mathcal{Y}_k+\alpha\nabla^{T}\left(\mathcal{P}_{l_k +1}\right)$
\STATE $\mathcal{Q}_{k}=\mathcal{Z}_{k}-\alpha \nabla f\left(\mathcal{Z}_{k}\right) $
\STATE $\mathcal{X}_{k+1}=\Pi_{\Omega}\left(\mathcal{X}_{k}-\mathcal{Y}_{k}+\mathcal{Q}_{k}\right)$
\ENDFOR
\end{algorithmic}
\end{algorithm}

\subsection{Case $3$. $\mathcal{L}=\mathcal{\nabla}$ and $\varphi_{\mu} =\mu\||\mathcal{L}( .)\||_{1}+\|| . \||_{*}$ } 
Considering these choices, the obtained  problem is as follows
\begin{equation}\label{problem23}
\min _{\mathcal{X} \in \mathbb{T}}\left(\frac{1}{2}\|\mathscr{F}(\mathcal{X})-\mathcal{B}\|_{F}^{2}+\mu \||\nabla(\mathcal{X})\||_{1}+\|| \mathcal{X}\||_{*}\right),
\end{equation}
where  $\|| . \||_{*}$ is the tensor nuclear norm given in Definition \ref{nuclearnorm}.
If,  in addition, we impose  that $\|| \mathcal{X}\||_{*}<\epsilon$ for some $\epsilon>0$ then, the unconstrained problem (\ref{problem23}) can be reformulated as the following constrained problem.
\begin{equation}\label{problem25}
\min _{\mathcal{X} \in \Omega}\left(\frac{1}{2}\|\mathscr{F}(\mathcal{X})-\mathcal{B}\|_{F}^{2}+\mu \||\nabla(\mathcal{X})\||_{1}\right)\quad \text{s.t}\quad \Omega=\{\mathcal{X} \in \mathbb{T}, \|| \mathcal{X}\||_{*}<\epsilon \}.
\end{equation}
The above problem has many applications such as in data completion problem and image processing. For instance, let us denote $\mathcal{M}\in \mathbb{T}$ our  incomplete data (for example, image with missing pixels). Let   $E$  be the set of the indices of the observed (available) elements, i.e.  $E=\{(i_1,i_2,...,i_N) ,  \mathcal{M}_{i_1,i_2,...,i_N} \text{ is observed}\}.$ Consider the  projection operator $\mathbf{P}_E: \mathbb{T}\longrightarrow\mathbb{T}$  given as  \begin{equation}\label{pix}    
(\mathbf{P}_E(\mathcal{X}))_{i_1,i_2,...,i_N} =\begin{cases}\mathcal{X}_{i_1,i_2,...,i_N} & \text { if } (i_1,i_2,...,i_N)\in E\\ 0 & \text { otherwise }.
\end{cases} 
\end{equation}
By taking $\mathscr{F}=\mathbf{P}_E$ and $\mathcal{B}=\mathbf{P}_E(\mathcal{M})$ in (\ref{problem25}), our proposed algorithm can be   applied to fill the missing  elements of the data $\mathcal{M}$. This will be illustrated in the numerical section.

\section{Convergence acceleration using  Extrapolation }\label{S5}
Extrapolation methods  \cite{e6,ridwane,e8,e1,e3,jbilou,j200,j100,e2} are very useful techniques for accelerating the convergence of slowly sequences as those  generated by our proposed algorithm (\ref{alg1}). In this section, we adopt two  recent Extrapolation methods, the  Global Tensor Topological Extrapolation Transformation (GT-TET) and the High-Order Singular Values Decomposition based on Minimal Extrapolation Method (HOSVD-MPE).

\subsection{GT-TET}

Topological  Extrapolation Algorithm (TEA) is one of the most popular extrapolation method to enhance convergence of slowly vector sequences. It is well-regarded to  its  theoretical clarity and numerical efficiency, particularly, when employed to address nonlinear problems, as exemplified by our primary problem (\ref{problem}). It was proposed for the first time by Brezenski \cite{brezi2} for vector sequences. Recently,  in  \cite{jbilou}, tensor version of TEA, namely GT-TET, was introduced to address multidimensional sequences.  \\
Let $(\mathcal{S}_n)_n$ be a convergent  sequence of  tensors, and $m \in \mathbb{N}^{*}$. Given  $2m+1$ terms $\mathcal{S}_n,\mathcal{S}_{n+1},...,\mathcal{S}_{n+2m}$, the GT-TET method provides an approximation $\mathcal{S}_{n,m}^{TET}$ to the limit as \begin{equation}\label{approxim}
  \mathcal{T}_{n,m}^{TET}= \mathcal{S}_n+\sum_{j=1}^{m}c_j^{(n)}\Delta\mathcal{S}_{n+j-1},
\end{equation}
where differences $\Delta\mathcal{S}_{n+j-1}=\mathcal{S}_{n+j}-\mathcal{S}_{n+j-1}$, and the weights $c_1^{(n)}, c_2^{(n)},...,c_m^{(n)}$ are determined via the solution of the least-squares problem:
\begin{equation}\label{matpro}
c^{(n,m)}=arg\min_{x\in \mathbb{R}^{m}}\| H^{(n,m)}x-b^{(n,m)}\|,
\end{equation}
where $c^{(n,m)}=\left[c_1^{(n)}, c_2^{(n)},...,c_m^{(n)}\right]^{T}\in \mathbb{R}^{m}$ and the matrix $H^{(n,m)}\in \mathbb{R}^{m\times m}$ and the vector $b^{(n,m)}\in \mathbb{R}^{m}$ are given as 
$$ H^{(n,m)} =
  \left[ {\begin{array}{cccc}
    \langle \mathcal{Y},\Delta^{2}\mathcal{S}_{n}\rangle  & \cdots & \langle \mathcal{Y},\Delta^{2}\mathcal{S}_{n+m-1}\rangle\\
    \langle \mathcal{Y},\Delta^{2}\mathcal{S}_{n+1}\rangle & \cdots & \langle \mathcal{Y},\Delta^{2}\mathcal{S}_{n+m}\rangle\\
    \vdots&\cdots&\vdots\\
    \langle \mathcal{Y},\Delta^{2}\mathcal{S}_{n+m-1}\rangle  & \cdots & \langle \mathcal{Y},\Delta^{2}\mathcal{S}_{n+2m-2}\rangle\\
  \end{array} } \right] , b^{(n,m)}= \quad \left[ {\begin{array}{c}
    \langle \mathcal{Y},\Delta\mathcal{S}_{n}\rangle \\
    \langle \mathcal{Y},\Delta\mathcal{S}_{n+1}\rangle \\
     \vdots\\
     \langle \mathcal{Y},\Delta\mathcal{S}_{n+m-1}\rangle\\
  \end{array} } \right] $$
where $\mathcal{Y} \in \mathbb{T}$ is  some chosen tensor, and the square deference $\Delta^{2}\mathcal{S}_{n+j-1}=\Delta\mathcal{S}_{n+j}-\Delta\mathcal{S}_{n+j-1}=\mathcal{S}_{n+j+1}-2\mathcal{S}_{n+j}+\mathcal{S}_{n+j-1}$.

\noindent Using the standard $QR$ decomposition of the matrix $H^{(n,m)}$, that is  $H^{(n,m)}=QR$ with $Q$ orthogonal and $R$ upper triangular, the solution of the problem (\ref{matpro}) (assuming full rank of $H^{(n,m)}$) is obtained via $$c^{(n,m)}= R^{-1}y \quad \text{such that }\quad y =Q^{T} b^{(n,m)}.$$ 
We  summarized the   steps of GT-TET in the  following algorithm.

\begin{algorithm}[h]
\caption{GT-TET}\label{alg6}
\begin{algorithmic}[1] 
\REQUIRE $\mathcal{S}_n,\mathcal{X}_{n+1},...,\mathcal{S}_{n+2m}$,
\ENSURE  $\mathcal{T}_{n,m}^{TET}$.
\STATE Compute the differences $\Delta\mathcal{S}_{n+j-1}, j=1,\cdots,2m-1$.
\STATE Compute the square differences $\Delta^2\mathcal{X}_{n+j-1}, j=1,\cdots,2m-2$.
\STATE  Construct the matrix $H^{(n,m)}$ and vector  $b^{(n,m)}$.
 \STATE Compute the $QR$ of $H^{(n,m)}$: $H^{(n,m)}=QR$. 
\STATE $y=Q^{T} b^{(n,m)}$, 
\STATE  $c^{(n,m)}= R^{-1}y$.

\STATE Compute  the approximation $\mathcal{T}_{n,m}^{TET}$ as given  in (\ref{approxim}). 

\end{algorithmic}
\end{algorithm}

\subsection{HOSVD-MPE}
The HOSVD-MPE is a polynomial extrapolation  method  recently proposed in  \cite{ridwane} to accelerate the convergence of tensor sequences.   It provides an approximation 
$\mathcal{X}_{n,m}^{H-M}$ to the limit as \begin{equation}\label{approxim2}
  \mathcal{X}_{n,m}^{H-M}= \sum_{j=1}^{m}c_j^{(n)}\mathcal{X}_{n+j-1},
\end{equation}
the weights $c_1^{(n,m)}, c_2^{(n,m)},...,c_{m}^{(n,m)}$ are determined by 
\begin{equation}\label{MOL}
c_j^{(n)}=\dfrac{\delta^{(n)}_{j}}{\sum_{i=1}^{m}\delta^{(n)}_{i}}\hspace{0.7cm} for\hspace{0.3cm} j=1,\ldots,m.
\end{equation}
where  $\delta^{(n,m)} = (\delta^{(n)}_{1},\delta^{(n)}_{1},\ldots,\delta^{(n)}_{m})^T $ is the solution of the    constrained least-squares tensor  problem
 \begin{equation}\label{mol}
\underset{\underset{\parallel x\parallel_2=1}{x\in \mathbb{R}^{m}}}{\min} \parallel\mathbb{ D}_{m}^{(n)}\bar{\times}_{m} x\parallel_{F},
  \end{equation}
with  $
\mathbb{D}_{m}^{(n)} =\left[ \Delta\mathcal{X}_{n},\Delta\mathcal{X}_{n+1},\ldots,\Delta\mathcal{X}_{n+m-1}\right] \in \mathbb{T}^{m}
$
 such that the $i^{th}$ frontal slice, obtained by fixing the last index at $i$, is given by  $\left[ \mathbb{D}^{(n)}_{m}\right]_{:,:,\ldots,:,i}=\Delta\mathcal{X}_{n+i-1}$, $1\leq i\leq m$.
In \cite{ridwane}, we  proposed the following algorithm  to compute the solution  the desired approximation $\mathcal{X}_{n,m}^{H-M}$ of the problem  (\ref{approxim2}).
 \begin{algorithm}[H]
  \begin{algorithmic}[1]
\caption{HOSVD-MPE\label{hosvdmp10}}
\REQUIRE $\mathcal{X}_{n},\mathcal{X}_{n+1},\ldots,\mathcal{X}_{n+m}.$
 \ENSURE $\mathcal{X}_{n,m}^{H-M}$
\STATE Compute the tensors $\Delta\mathcal{X}_{n},\Delta\mathcal{X}_{n+1},\ldots,\Delta\mathcal{X}_{n+m-1}$ 
\STATE Form the tensor $\mathbb{D}_{m}^{(n)}=\left[ \Delta\mathcal{X}_{n},\Delta\mathcal{X}_{n+1},\ldots,\Delta\mathcal{X}_{n+m-1}\right] .$
\STATE Compute the $m$-unfolding matrix of $\mathbb{ D}_{m}^{(n)}$ :$N=(\mathbb{ D}_{m}^{(n)})_{(m)}.$
\STATE Compute the square matrix $M:=NN^{T}\in \mathbb{R}^{m\times m} $.
  \STATE Compute the Eigen Value Decomposition (EVD) of $M:\hspace{0.5cm}M=V\Sigma V^{T}.$
 \STATE Determine $\delta^{(n,m)}= Ve_{m} $ with $e_{m}=(0,0,\ldots,1)^{T}\in \mathbb{R}^{m}$.
\STATE Determine $c_1^{(n)},c_1^{(n)},...,c_m^{(n)}$ via (\ref{MOL}).
\STATE  Set $\mathcal{X}_{n,m}^{H-M}= \sum_{j=1}^{m}c_j^{(n)}\mathcal{X}_{n+j-1}.$
\end{algorithmic}
\end{algorithm}

\subsection{Accelerated version of GTPG algorithm}
There are several schemes to apply extrapolation methods to a given sequence. In this work we adopt the restarting mode for integrating  the GT-TET and HOSVD-MPE methods  to  Algorithm \ref{alg2} which is the main algorithm of this work. Algorithms \ref{hos0} and Algorithm \ref{hos1} describe the  accelerated versions of GTPG using HOSVD-MPE and GT-TET, respectively. We denote by $(\mathcal{T}_{k})_k$ the extrapolated sequence produced by these two Algorithms \ref{hos0} and \ref{hos1}.

\begin{algorithm}[h]
  \begin{algorithmic}[1]
\caption{Accelerated GTPG using HOSVD-MPE (GTPG-HM)\label{hos0}}
\REQUIRE $m\in \mathbb{N}^{*}$, $\mathcal{T}_{1}=\mathcal{X}_{1}$ and $\epsilon$ (stop threshold).
\ENSURE $\mathcal{T}_{k}$.
\STATE $k=2,$
\STATE Starting from $\mathcal{X}_{1}$, compute  $\mathcal{X}_{1},\mathcal{X}_{2},\ldots,\mathcal{X}_{m}$ via Algorithm \ref{alg2} (GTPG), 
\STATE Compute $\mathcal{X}_{1,m}^{H-M}$ by HOSVD-MPE algorithm \ref{hosvdmp10}, 
\STATE Set $\mathcal{T}_{k}= \mathcal{X}_{1,m}^{H-M}$,

\IF{$ \frac{\parallel\mathcal{T}_{k}-\mathcal{T}_{k-1}\parallel}{\parallel\mathcal{T}_{k-1}\parallel}<\epsilon $ } 
  \STATE  Stop
\ELSE
    
   \STATE Set $\mathcal{X}_{1}=\mathcal{T}_{k}$ and $k=k+1$ and go to the second step.
\ENDIF
\end{algorithmic}
\end{algorithm}
\begin{algorithm}[h]
  \begin{algorithmic}[1]
\caption{Accelerated GTPG using GT-TET (GTPG-TET)\label{hos1}}
\REQUIRE $m,\in \mathbb{N}^{*}$, $\mathcal{T}_{1}=\mathcal{X}_{1}$ and $\epsilon$ (stop threshold).
\ENSURE $\mathcal{T}_{k}$.
\STATE $k=2,$
\STATE Starting from $\mathcal{X}_{1}$, compute  $\mathcal{X}_{1},\mathcal{X}_{2},\ldots,\mathcal{X}_{2m+1}$ via Algorithm \ref{alg2} (GTPG), 
\STATE Compute $\mathcal{X}_{1,m}^{TET}$ by GT-TET algorithm \ref{alg6}, 
\STATE Set $\mathcal{T}_{k}= \mathcal{X}_{1,m}^{TET},$
\IF{$ \frac{\parallel\mathcal{T}_{k}-\mathcal{T}_{k-1}\parallel}{\parallel\mathcal{T}_{k-1}\parallel}<\epsilon $ } 
  \STATE  Stop
\ELSE  
   \STATE Set $\mathcal{X}_{1}=\mathcal{T}_{k}$ and $k=k+1$ and go to the second step.
\ENDIF
\end{algorithmic}
\end{algorithm}

\section{Numerical experiment}\label{S6}
In this section, we illustrated the efficiency of the proposed extrapolated  algorithm  TISTA-TET, TISTA-HM, TDPG-TET and TDPG-HM (see Table \ref{Tabalg}. As explained in  Section \ref{S4}, some image deblurring problems, such as image completion can be modeled within the context of our primary problem (\ref{problem}) and then  the applicability of our proposed method  to this kind of problems. The objective of image completion problems is to reconstruct missing regions within an observed image by using content-aware information extracted from undisturbed regions, based on the available pixels. Let  $\mathcal{X}_{orig} \in \mathbb{R}^{I_1\times I_2 \times 3}$ denote our original color image, and we uniformally/randomly mask off a portion of its entries that we regard  as missing values. Let   $\mathcal{B}\in \mathbb{R}^{I_1\times I_2 \times 3}$ the obtained  incomplete image and  let be $E$ the set of the indices of the available elements, i.e.  $E=\{(i_1,i_2,...,i_3) ,  \mathcal{B}_{i_1,i_2,i_3} \text{ is observed}\}$. We test the performance of our algorithms to recover the missing entries via the solution of the following problems
$$\min _{\mathcal{X} \in \Omega}\left(\frac{1}{2}\|\mathbf{P}_E(\mathcal{X})-\mathcal{B}\|_{F}^{2}+ \||\mathcal{X}\||_{1}\right), \hspace{1cm}\text{using the lgorithm TISTA \ref{alg3}}$$
$$\min _{\mathcal{X} \in \Omega}\left(\frac{1}{2}\|\mathbf{P}_E(\mathcal{X})-\mathcal{B}\|_{F}^{2}+\mu \||\nabla(\mathcal{X})\||_{1}\right),  \hspace{0.7cm}\text{using the algorithm TDPG \ref{TDPG}},$$
where $\Omega=\{\mathcal{X} \in \mathbb{R}^{I_1\times I_2 \times 3}, \; \text{such that}\; \|| \mathcal{X}\||_{*}<\epsilon \}$ and $\mathbf{P}_E$ is the projection operateur given in (\ref{pix}).
To assess the efficiency of our algorithm, we measure its performance using the peak signal-to-noise ratio ($PSNR$) using the standard \textit{psnr} function from Matlab,   and the relative error $Re$  defined as $Re=\dfrac{\|\mathcal{X}_{prx}-\mathcal{X}_{orig}\|}{\|\mathcal{X}_{orig}\|}$
where $\mathcal{X}_{prx}$ is the approximate solution. The  iterations will be stopped when  $\dfrac{\|\mathcal{X}_{k+1}-\mathcal{X}_{k}\|}{\|\mathcal{X}_{k}\|}<10^{-3}.$
 All computations were conducted using  MATLAB R2023a  on an 12th Gen Intel(R) Core(TM) i7-1255U  1.70 GHz  computer with 16 GB of RAM. We tested and compared the algorithms listed in the following table

\begin{table}[H]
\caption{Algorithms}\label{Tabalg}
\centering
\begin{tabular}{l|l}
  \hline
   Acronym & meaning\\ 
   \hline
 TISTA& basic algorithm TISTA \ref{alg3} \\
 TISTA-TET&extrapolated TISTA using topological extrapolation GT-TET  \\
 TISTA-HM& extrapolated TISTA using HOSVD-MPE extrapolation \\
 TDPG&basic algorithm TDPG \ref{TDPG}\\
 TDPG-TET&extrapolated TDPG using topological extrapolation GT-TET\\
 TDPG-HM&extrapolated TISTA using HOSVD-MPE extrapolation\\
     \hline 
\end{tabular}
\end{table}

\noindent In the tow subsequent parts, we will test the performance of our algorithms both for the randomly uncompleted images and uniformly uncompleted ones. Let us first investigate  the
efficiency of our proposed methods  in the case of  randomly missing pixels.

\subsection{Random missing pixels}
In this part, we use  the $250\times 250\times 3$  color images 'footbal.png', 'Peppers.bmp' and 'flamingos.jpg', and we randomly
mask off about $55\%$ of their  entries as shown in Figure \ref{Figurer1}. The corresponding results are shown in Figure \ref{figur2r}. In one hand, we see that TISTA and TDPG algorithms provide images with nearly the same clarity. In the other side, we can see clearly that the extrapolated versions of TISTA and  TDPG provide clearer images.\\
Figures \ref{figure3r} and \ref{figure3r1} illustrate,  the relative error and the PSNR curves produced by the six algorithms. We can see clearly that the extrapolated algorithms TISTA-TET, TISTA-HM and TDPG-TET, TDPG-HM   converge faster than the basic ones TISTA and TDPG.
  The curves in Figure \ref{figure3r2},  represent the convergence rates of the four  extrapolated methods TISTA-TET, TISTA-HM and TDPG-TET, TDPG-HM . The curves depict the efficiency of the four methods, highlighting the superiority of TISTA-TET and TISTA-HM in comparison to the TDPG-TET and TDPG-HM algorithms.
\begin{figure}[H]
          \centering
    \includegraphics[width=0.3\textwidth]{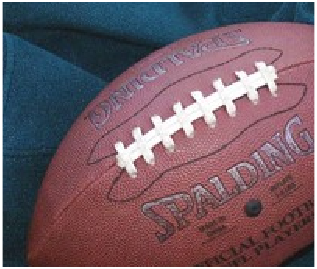}
    \includegraphics[width=0.3\textwidth]{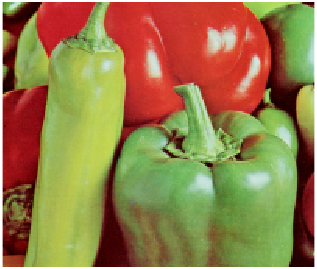}   
    \includegraphics[width=0.3\textwidth]{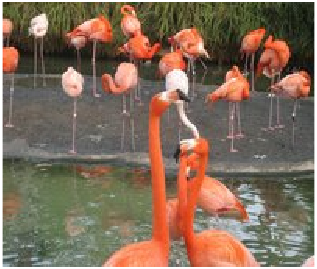}      \\
         \centering
    \includegraphics[width=0.3\textwidth]{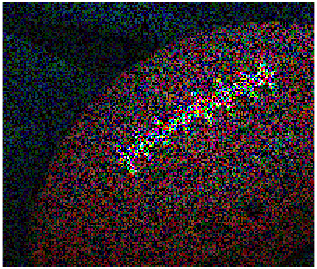}
    \includegraphics[width=0.3\textwidth]{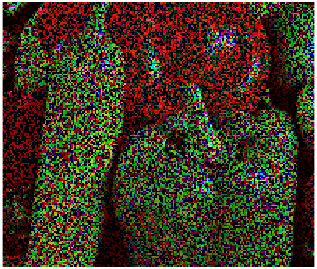}   
    \includegraphics[width=0.3\textwidth]{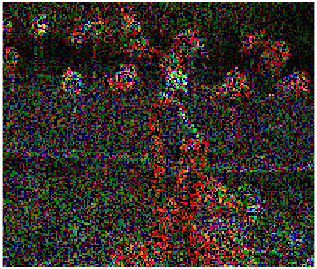}      
    \caption{The original images (1st row), the (randomly) $55\%$ incompleted images (2nd row), }\label{Figurer1}
\end{figure}

\begin{figure}[H]
 
\begin{minipage}{1.4cm}
        \centering
\caption*{\centering \ssmall{\ssmall TISTA}}
    \includegraphics[width=1.3\textwidth]{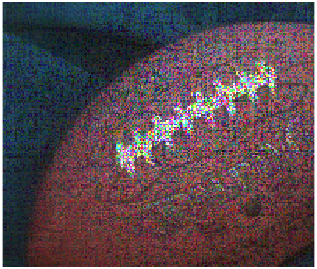}
\end{minipage}\hfill
\begin{minipage}{1.4cm}
        \centering
\caption*{\centering \ssmall{\ssmall TISTA-TET}}
    \includegraphics[width=1.3\textwidth]{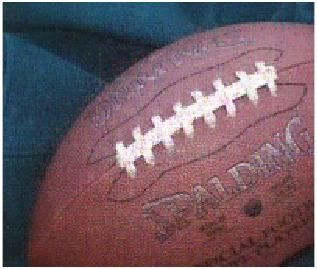}   
\end{minipage}\hfill
\begin{minipage}{1.4cm}
        \centering
\caption*{\centering \ssmall{\ssmall TISTA-HM}}
    \includegraphics[width=1.3\textwidth]{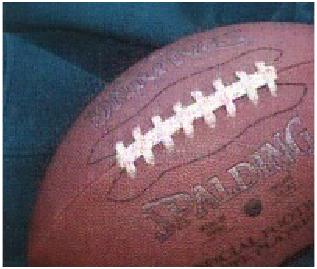}   
\end{minipage}\hfill
\begin{minipage}{1.4cm}
\caption*{\centering \ssmall{\ssmall TDPG}}
        \centering
    \includegraphics[width=1.3\textwidth]{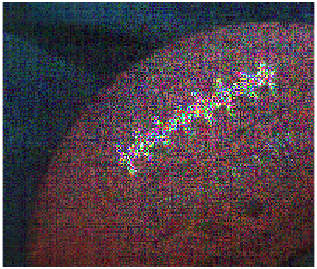}      
\end{minipage}\hfill
    \begin{minipage}{1.4cm}
        \centering
\caption*{\centering \ssmall{\ssmall TDPG-TET}}
    \includegraphics[width=1.3\textwidth]{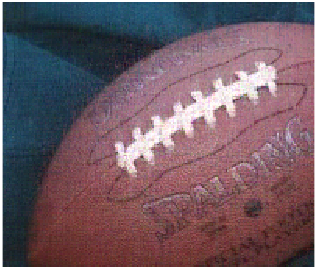}       
    \end{minipage}\hfill
    \begin{minipage}{1.4cm}
        \centering
\caption*{\centering \ssmall{\ssmall TDPG-HM}}
    \includegraphics[width=1.3\textwidth]{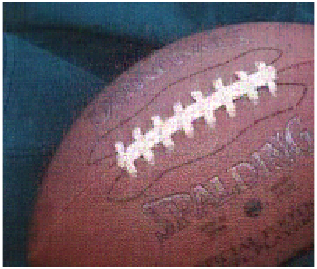}
    \end{minipage}\\
\centering

\begin{minipage}{1.4cm}
        \centering
    \includegraphics[width=1.3\textwidth]{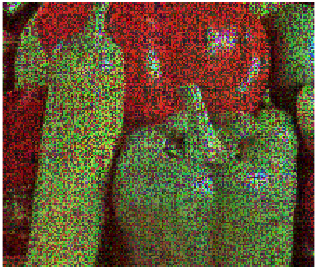}
\end{minipage}\hfill
\begin{minipage}{1.4cm}
        \centering
    \includegraphics[width=1.3\textwidth]{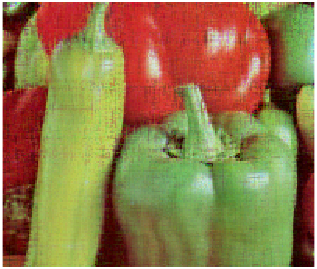} 
\end{minipage}\hfill
\begin{minipage}{1.4cm}
        \centering
    \includegraphics[width=1.3\textwidth]{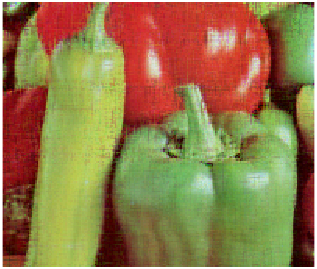} 
\end{minipage}\hfill
\begin{minipage}{1.4cm}
        \centering
    \includegraphics[width=1.3\textwidth]{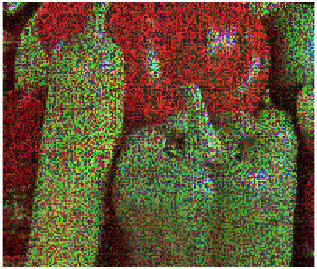}      
\end{minipage}\hfill
    \begin{minipage}{1.4cm}
        \centering
    \includegraphics[width=1.3\textwidth]{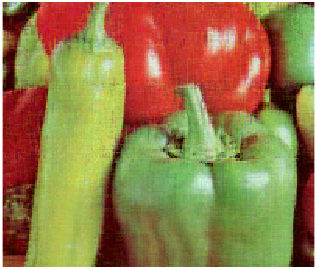}       
    \end{minipage}\hfill
    \begin{minipage}{1.4cm}
        \centering
    \includegraphics[width=1.3\textwidth]{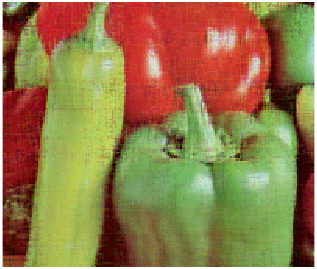}
    \end{minipage}\\
\begin{minipage}{1.4cm}
        \centering
    \includegraphics[width=1.3\textwidth]{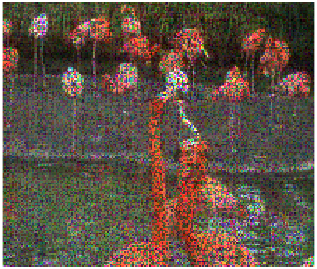}
    \end{minipage}\hfill
    \begin{minipage}{1.4cm}
        \centering
    \includegraphics[width=1.3\textwidth]{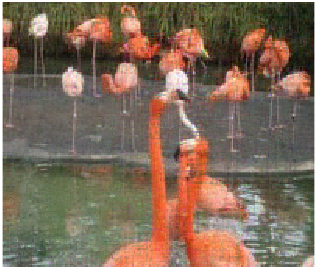}
    \end{minipage} \hfill
    \begin{minipage}{1.4cm}
        \centering
    \includegraphics[width=1.3\textwidth]{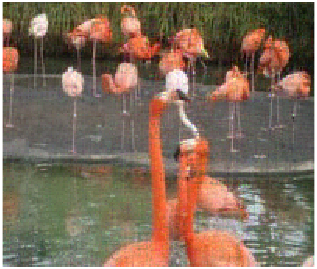}
    \end{minipage}\hfill
    \begin{minipage}{1.4cm}
        \centering      
        \includegraphics[width=1.3\textwidth]{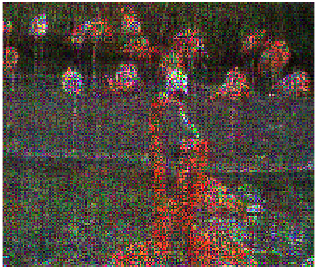}      
    \end{minipage}\hfill
    \begin{minipage}{1.4cm}
        \centering      
        \includegraphics[width=1.3\textwidth]{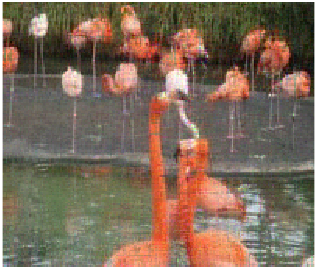}       
    \end{minipage}\hfill
    \begin{minipage}{1.4cm}
        \centering       \includegraphics[width=1.3\textwidth]{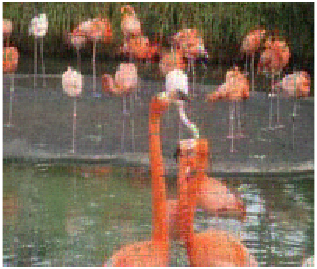}
    \end{minipage} 
    \caption{The recovered images by: TISTA (1th column), TISTA-TET (2th column),  TISTA-HM (3th column),  TDPG (4th column),  TDPG-TET (5th column) and b TDPG-HM (6th column).}\label{figur2r}
\end{figure}
\begin{figure}
 \begin{minipage}{5.8cm}
   \includegraphics[width=1.4\textwidth]{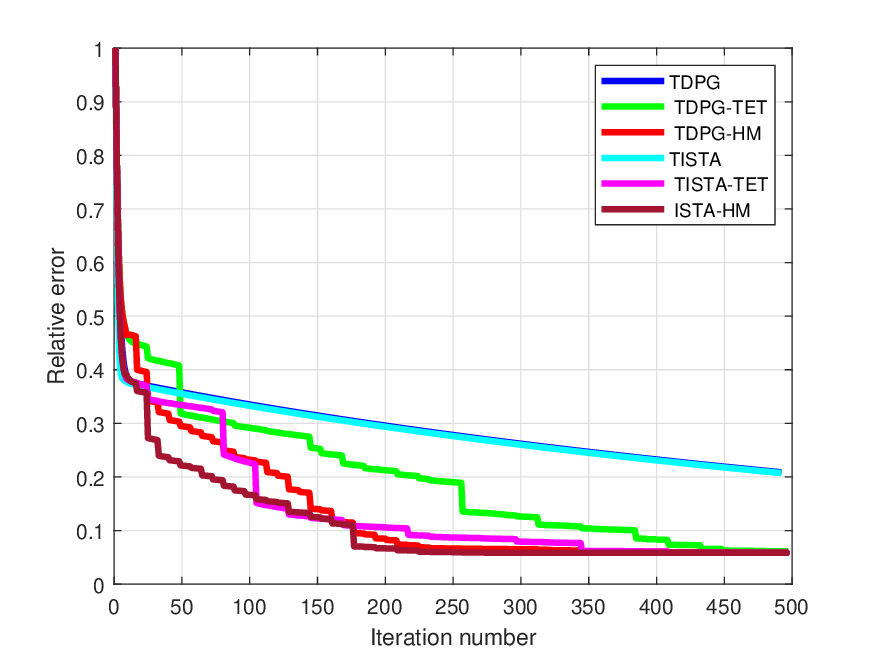}
    \caption{ The curves of relative errorr.}
    \label{figure3r}
     \end{minipage}\hspace{1.7cm}
\begin{minipage}{5.8cm}
   \includegraphics[width=1.4\textwidth]{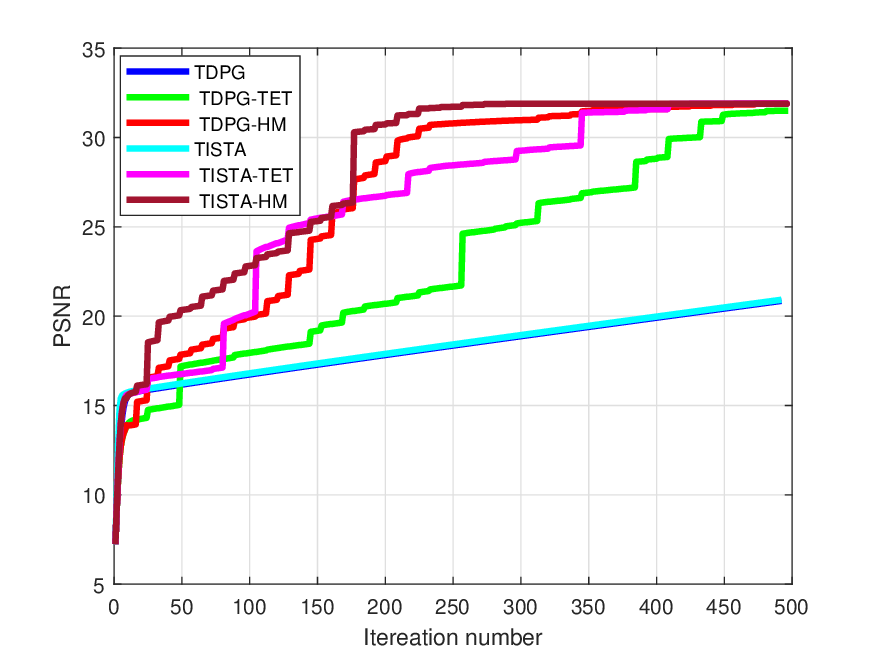} 
    \caption{ The curves of PSNR.}
    \label{figure3r1}  
      \end{minipage}\\
      \begin{center}
\begin{minipage}{5.8cm}
   \includegraphics[width=1.4\textwidth]{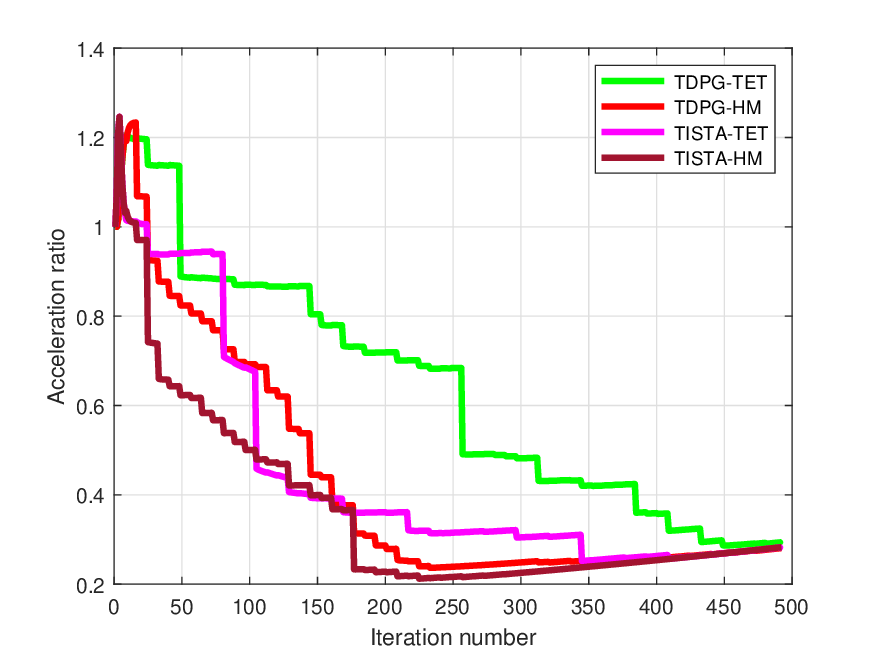}      
\caption{ The curves of the accelertion rate.}
\label{figure3r2}
 \end{minipage}
 \end{center}
 \end{figure}
\newpage
\subsection{Nonrandom missing pixels}

In this subsection, we choose the $250\times 250\times 3$  color images 'boats.png', 'hand.jpg' and 'soccer.jpg', and we added to these images different masks as shown in Figure \ref{Figures1}.
\begin{figure}[h]
      \centering
    \includegraphics[width=0.3\textwidth]{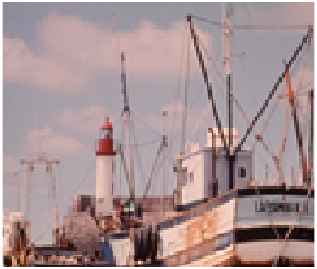}
    \includegraphics[width=0.3\textwidth]{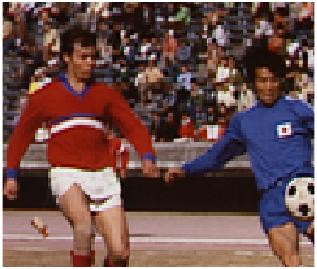}   
    \includegraphics[width=0.3\textwidth]{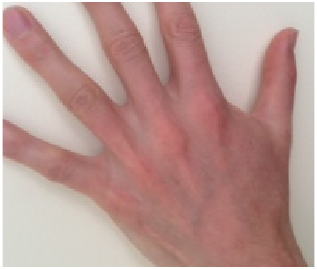}  \\    
        \centering
    \includegraphics[width=0.3\textwidth]{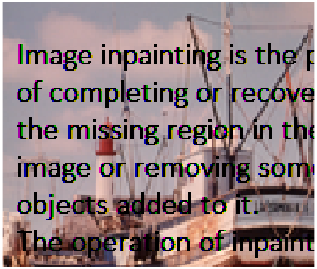}
    \includegraphics[width=0.3\textwidth]{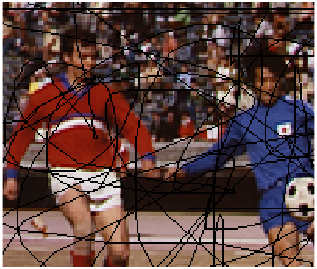}   
    \includegraphics[width=0.3\textwidth]{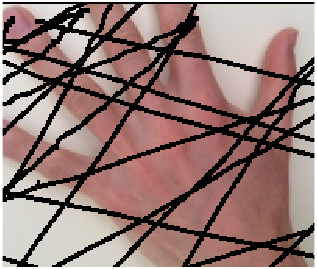}      

    \caption{The original images (1st row) and  the uniformly incomplete images (2nd row). }\label{Figures1}
\end{figure}
\noindent The recovered images are shown in Figure \ref{figur2s}. As in the  random case, we remark that TISTA and TDPG have nearly the same performance, while their extrapolated versions  provide more clear images.\\
\noindent The relative error and the PSNR curves of the six algorithms are  shown   in   Figures \ref{figure3s} and \ref{figure3s1}, respectively. The curves behaviours  show the applicability and effectiveness of the extrapolated algorithms TISTA-TET, TISTA-HM,  TDPG-TET and TDPG-HM. \\
 Figure \ref{figure3s2} illustrates the convergence  rate curves of  all extrapolated methods. We can see clearly the fast convergence of TISTA-TET and TDPG-TET as compated to  TISTA-HM and  TDPG-HM. This shows the advantage of  the extrapolated method using the topological epsilon algorithm for nonrandom incompletion problems.

\begin{figure}[htbp]
\begin{minipage}{1.6cm}
        \centering
        \caption*{ \ssmall TISTA}
    \includegraphics[width=1.3\textwidth]{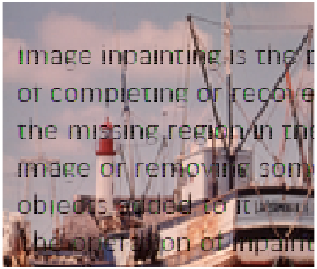}
\end{minipage}\hfill
\begin{minipage}{1.6cm}
        \centering
        \caption*{\ssmall TISTA-TET}
    \includegraphics[width=1.3\textwidth]{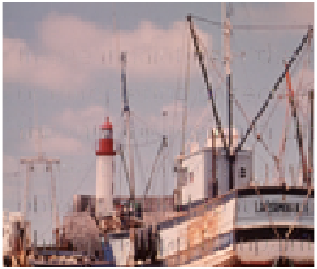}   
\end{minipage}\hfill
\begin{minipage}{1.6cm}
        \centering
        \caption*{ \ssmall TISTA-HM}
    \includegraphics[width=1.3\textwidth]{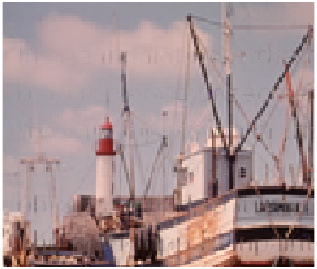}
\end{minipage}\hfill
\begin{minipage}{1.6cm}
        \centering
        \caption*{\ssmall TDPG}
    \includegraphics[width=1.3\textwidth]{boats_Reco_TDPG.png}    \end{minipage}\hfill
    \begin{minipage}{1.6cm}
        \centering
        \caption*{\ssmall TDPG-TET}
    \includegraphics[width=1.3\textwidth]{boats_Reco_HOSVD.png}        \end{minipage}\hfill
    \begin{minipage}{1.6cm}
        \centering
        \caption*{\ssmall TDPG-HM}
    \includegraphics[width=1.3\textwidth]{Boats_Reco_TET.png}
    \end{minipage}\\
\centering
\begin{minipage}{1.6cm}
        \centering
    \includegraphics[width=1.3\textwidth]{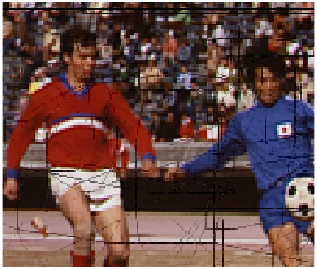}      
\end{minipage}\hfill
    \begin{minipage}{1.6cm}
        \centering
    \includegraphics[width=1.3\textwidth]{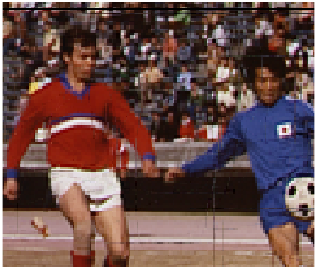}       
    \end{minipage}\hfill
    \begin{minipage}{1.6cm}
        \centering
    \includegraphics[width=1.3\textwidth]{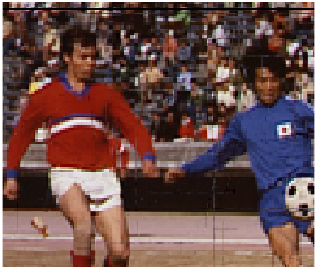}
    \end{minipage}\hfill
\begin{minipage}{1.6cm}
        \centering
    \includegraphics[width=1.3\textwidth]{soccer_Reco_TDPG.png}      
\end{minipage}\hfill
    \begin{minipage}{1.6cm}
        \centering
    \includegraphics[width=1.3\textwidth]{soccer_Reco_TET.png}       
    \end{minipage}\hfill
    \begin{minipage}{1.6cm}
        \centering
    \includegraphics[width=1.3\textwidth]{soccer_Reco_HOSVD.png}
    \end{minipage}\\
    
\begin{minipage}{1.6cm}
        \centering       \includegraphics[width=1.3\textwidth]{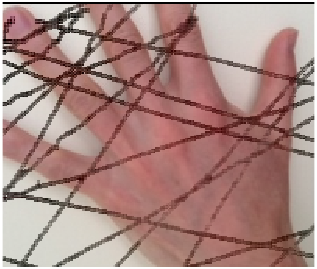}      
    \end{minipage}\hfill
    \begin{minipage}{1.6cm}
        \centering       \includegraphics[width=1.3\textwidth]{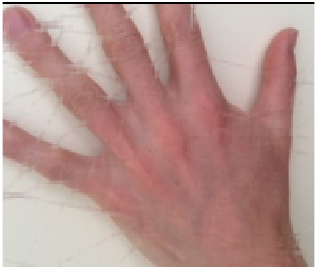}       
    \end{minipage}\hfill
    \begin{minipage}{1.6cm}
        \centering       \includegraphics[width=1.3\textwidth]{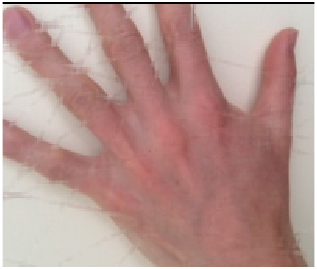}
    \end{minipage} \hfill
    \begin{minipage}{1.6cm}
        \centering       \includegraphics[width=1.3\textwidth]{hand_Reco_TDPG.png}      
    \end{minipage}\hfill
    \begin{minipage}{1.6cm}
        \centering       \includegraphics[width=1.3\textwidth]{hand_Reco_TET.png}       
    \end{minipage}\hfill
    \begin{minipage}{1.6cm}
        \centering       \includegraphics[width=1.3\textwidth]{hand_Reco_HOSVD.png}
    \end{minipage} 
    \caption{The original images (1st column), the corrupted images (2nd column), the recovered images by TDPG (3th column), by TDPG-TET (4th column) and by  TDPG-HM (5th column).}\label{figur2s}
\end{figure}

\begin{table}
\caption{Comparison of   algorithms  for  different incompletion levels $30\%$, $50\%$ \text{and}  $80\%$.}\label{T1} 
\centering
\begin{tabular}{c|c|c|c|c|c}
  \hline
   Images & $\%$ & Algorithms &  PSNR & Relative error & CPU-time (s)\\ \hline
  \multirow{9}{*}{\rotatebox{-60}{football.png}}  &\multirow{3}{*}{$30\%$} &TDPG/TISTA & 34.69/34.69& 4.25e-02/4.24e-02 & 12.93/12.14  \\
    & &TDPG-TET/TISTA-TET & 36,59/36.67  &  3.45e-02/ 3.42e-02 & 6.63 /5.72  \\
    & &TDPG-HM/TISTA-HM & 36.52/36.54  & 3.48e-02/3.47e-02 & 3.56/4.86  \\
    \cline{2-6}
  &\multirow{3}{*}{$50\%$} & TDPG/TISTA  & 28.21/28.21 &  9.08e-02/9.08e-02 & 19.83/18.59 \\
    & &TDPG-TET/TISTA-TET & 29.48/29.51 & 7.81e-02/7.79e-02 & 10.94/7.05 \\
    & & TDPG-HM/TISTA-HM &29.52/29.54 & 7.78e-02/7.76e-02 & 10.09/7.88 \\    
     \cline{2-6}
  &\multirow{3}{*}{$80\%$} &TDPG/TISTA&  19.10/19.12 &  2.83e-01/2.82e-01 & 35.10/39.32 \\
    & &TDPG-TET/TISTA-TET & 22,76/22.82 & 1.71e-01/ 1.70e-01& 20.36/14.23 \\
    & & TDPG-HM/TISTA-HM & 22.72/22.69 & 1.72e-01/ 1.73e-01 & 24.75/11.89 \\
     \hline 
  \multirow{9}{*}{\rotatebox{-60}{barbara.bmp}} &\multirow{3}{*}{$30\%$} & TDPG/TISTA  & 36.11/36.11 & 3.38e-02/3.38e-02 & 12.77/12.78  \\
    & & TDPG-TET/TISTA-TET & 37.83 /87.87 &  2.75e-02/2.74e-02 & 6.82/4.4   \\
    & &TDPG-HM/TISTA-HM& 37.77/37.78  & 2.77e-02/ 2.77e-02 & 10.12/5.41  \\
   \cline{2-6}
  &\multirow{3}{*}{$50\%$} &TDPG/TISTA &29.92/29.92 &  7.00e-02/7.00e-02 & 32.08/37.08 \\
    & &TDPG-TET/TISTA-TET & 31.39/31.61 & 5.82e-02/5.67e-02 & 28.39/33.69 \\
    & & TDPG-HM/TISTA-HM & 31.40/31.40 &  5.82e-02/ 5.82e-02 & 19.44/10.86 \\    
     \cline{2-6}
  &\multirow{3}{*}{$80\%$} &TDPG/TISTA  & 21.44/21.44 &  1.92e-01/1.92e-01 & 45.54/48.67 \\
    & &TDPG-TET/TISTA-TET & 23.17/23.07 & 1.54e-01/1.56e-01 & 35.96/27.69 \\
    & & TDPG-HM/TISTA-HM & 23.21/23.20 & 1.53e-01/1.53e-01 & 34.94/23.32 \\
     \hline
  \multirow{9}{*}{\rotatebox{-60}{flamingos.jpg}} 
  &\multirow{3}{*}{$30\%$} &TDPG/TISTA  & 34.93/34.92 &4.51e-02/4.51e-02 & 12.82/13.93   \\
    & &TDPG-TET/TISTA-TET & 36.61/36.85  &  3.63e-02/3.54e-02& 6.98/7.88   \\
    & &TDPG-HM/TISTA-HM & 36.60/36.60  & 3.64e-02/3.64e-02 & 11.95/6.84  \\
    \cline{2-6}
  &\multirow{3}{*}{$50\%$} &TDPG/TISTA & 28.29/28.29 &  9.92e-02/ 9.91e-02 & 22.45/25.30 \\
    & &TDPG-TET/TISTA-TET & 29.66/29.60 & 8.28e-02/8.35e-02 & 10.62/9.26 \\
    & & TDPG-HM/TISTA-HM & 29.62/29.63 & 8.28e-02/8.33e-02 & 10.15/9.49 \\    
     \cline{2-6}
  &\multirow{3}{*}{$80\%$} &TDPG/TISTA & 20.49/20.48 & 2.57e-012.56e-01 & 43.35 /45.01\\
    & &TDPG-TET/TISTA-TET & 21.80/21.96 & 2.20e-01/2.16e-01 & 24.01/16.95 \\
    & &TDPG-HM/TISTA-HM & 21.97/21.96 & 2.15e-01/2.14e-01 & 22.49/1854 \\
     
     \hline 
\end{tabular}
\end{table}

\section{Conclusion}\label{S7}
In this work, by embracing a proximal-based approach, we introduced a solution for a constrained multidimensional  minimization problem with a general non-smooth regularization term. First, we introduce an approximation of the unconstrained problem (in the whole space). An approximated  solution of the constrained problem is obtained by projecting, via Tseng's Algorithm, the unconstrained solution into  the closed set $\Omega$. Through  numerical experiments on image completion, we have illustrated the efficiency of the adopted approach. Due to the slowness of the proposed algorithms, we have enhancing their convergence rate by incorporating some recent tensor extrapolation methods. The suggest numerical tests illustrate the significant role of these extrapolation methods in speeding up the convergence of the proposed algorithms.

\begin{figure}[ht]
 \begin{minipage}{5.8cm}
    \includegraphics[width=1.4\textwidth]{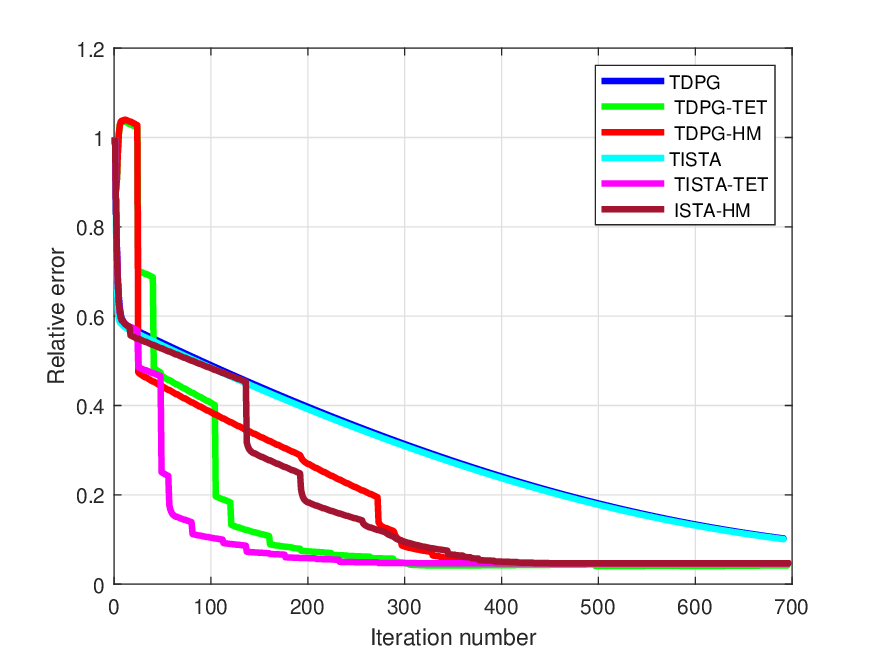}
    \caption{ The curves of relative errorr.}
    \label{figure3s}
     \end{minipage}\hspace{1.7cm}
\begin{minipage}{5.8cm}
    \includegraphics[width=1.4\textwidth]{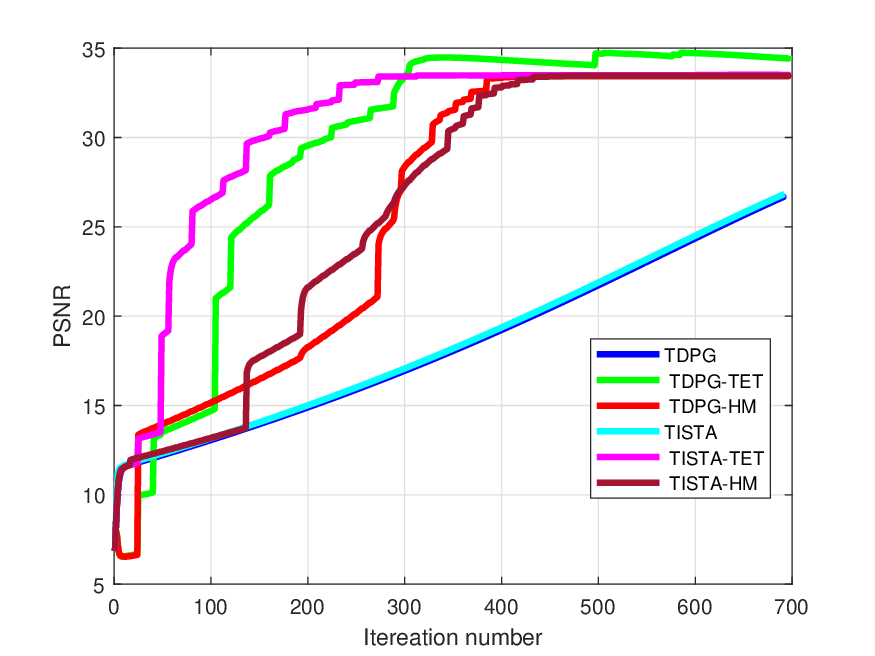} 
    \caption{ The curves of PSNR.}
    \label{figure3s1}  
      \end{minipage}\\
      \begin{center}
\begin{minipage}{5.8cm}
    \includegraphics[width=1.4\textwidth]{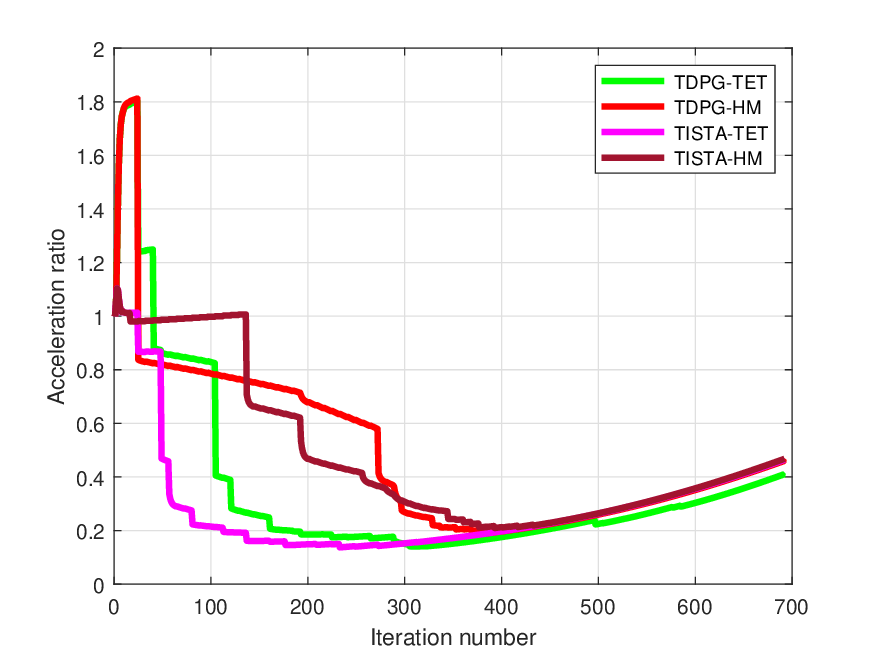}      
\caption{ The curves of the accelertion rate.}
\label{figure3s2}
 \end{minipage}
 \end{center}
 \end{figure}


\begin{thebibliography}{10}

\bibitem{t2}
{\sc M.~Bachmayr and M.~Burger}, {\em Iterative total variation schemes for
  nonlinear inverse problems}, Inverse Problems, 25 (2009), p.~105004.

\bibitem{2bauschke}
{\sc H.~Bauschke and P.~Combettes}, {\em Convex Analysis and Monotone Operator
  Theory in Hilbert Spaces}, vol.~10, CMS books in mathematics, 2011.

\bibitem{bauschke1}
{\sc H.~H. Bauschke, R.~S. Burachik, P.~L. Combettes, V.~Elser, D.~R. Luke, and
  H.~Wolkowicz}, {\em Fixed-point algorithms for inverse problems in science
  and engineering}, vol.~49, Springer Science \& Business Media, 2011.

\bibitem{bouhamidi2018conditional}
{\sc M.~Bellalij, A.~Bouhamidi, R.~Enkhbat, K.~Jbilou, and M.~Raydan}, {\em
  Conditional gradient method for double-convex fractional programming matrix
  problems}, Journal of Optimization Theory and Applications, 176 (2018),
  pp.~163--177.

\bibitem{benc}
{\sc O.~Benchettou, A.~H. Bentbib, and A.~Bouhamidi}, {\em An accelerated
  tensorial double proximal gradient method for total variation regularization
  problem}, Journal of Optimization Theory and Applications,  (2023),
  pp.~1--24.

\bibitem{e6}
{\sc A.~H. Bentbib, K.~Jbilou, and R.~Tahiri}, {\em Einstien-multidimensional
  extrapolation methods}, arXiv preprint arXiv:2311.05024,  (2023).

\bibitem{ridwane}
{\sc A.~H. Bentbib, K.~Jbilou, and R.~Tahiri}, {\em Hosvd-tmpe: an
  extrapolation method for multidimensional sequences}, (Submitted at Calcolo
  Journal),  (2023).

\bibitem{e8}
{\sc A.~H. Bentbib, K.~Jbilou, and R.~Tahiri}, {\em N-mode minimal tensor
  extrapolation methods}, Numer Algorithms,  (2023).

\bibitem{m2}
{\sc A.~L. Bertozzi and E.~Merkurjev}, {\em Graph-based optimization approaches
  for machine learning, uncertainty quantification and networks}, in Handbook
  of Numerical Analysis, vol.~20, Elsevier, 2019, pp.~503--531.

\bibitem{i2}
{\sc X.~Bresson and T.~F. Chan}, {\em Fast dual minimization of the vectorial
  total variation norm and applications to color image processing}, Inverse
  problems and imaging, 2 (2008), pp.~455--484.

\bibitem{brezi2}
{\sc C.~Brezinski}, {\em G{\'e}n{\'e}ralisations de la transformation de
  shanks, de la table de pad{\'e} et de l'$\varepsilon$-algorithme}, Calcolo,
  12 (1975), pp.~317--360.

\bibitem{e1}
{\sc C.~Brezinski}, {\em Convergence acceleration during the 20th century},
  Numerical Analysis: Historical Developments in the 20th Century, 113 (2001).

\bibitem{e3}
{\sc C.~Brezinski, M.~Redivo-Zaglia, and S.~Serra-Capizzano}, {\em
  Extrapolation methods for pagerank computations}, Comptes Rendus.
  Math{\'e}matique, 340 (2005), pp.~393--397.

\bibitem{ista1}
{\sc A.~Chambolle, R.~De~Vore, N.-Y. Lee, and B.~Lucier}, {\em Nonlinear
  wavelet image processing: variational problems, compression, and noise
  removal through wavelet shrinkage}, IEEE Transactions on Image Processing, 7
  (1998), pp.~319--335.

\bibitem{11cichocki}
{\sc A.~Cichocki, R.~Zdunek, and S.-i. Amari}, {\em Nonnegative matrix and
  tensor factorization [lecture notes]}, IEEE signal processing magazine, 25
  (2007), pp.~142--145.

\bibitem{13combettes}
{\sc P.~L. Combettes and V.~R. Wajs}, {\em Signal recovery by proximal
  forward-backward splitting}, Multiscale modeling \& simulation, 4 (2005),
  pp.~1168--1200.

\bibitem{15ekeland}
{\sc I.~Ekeland and R.~Temam}, {\em Convex analysis and variational problems
  (studies in mathematics and its applications, vol. 1 north-holland,
  amsterdam}, 1976.

\bibitem{jbilou}
{\sc A.~El~Ichi, K.~Jbilou, and R.~Sadaka}, {\em Tensor global extrapolation
  methods using the n-mode and the einstein products}, Mathematics, 8 (2020),
  p.~1298.

\bibitem{ista}
{\sc M.~Figueiredo and R.~Nowak}, {\em An em algorithm for wavelet-based image
  restoration}, IEEE Transactions on Image Processing, 12 (2003), pp.~906--916.

\bibitem{21haddad}
{\sc A.~Haddad}, {\em Stability in a class of variational methods}, Applied and
  Computational Harmonic Analysis, 23 (2007), pp.~57--73.

\bibitem{ista2}
{\sc E.~T. Hale, W.~Yin, and Y.~Zhang}, {\em A fixed-point continuation method
  for l1-regularized minimization with applications to compressed sensing},
  CAAM TR07-07, Rice University, 43 (2007), p.~44.

\bibitem{j200}
{\sc K.~Jbilou and H.~Sadok}, {\em Analysis of some vector extrapolation
  methods for solving systems of linear equations}, Numerische Mathematik, 70
  (1995), pp.~73--89.

\bibitem{j100}
{\sc K.~Jbilou and H.~Sadok}, {\em Vector extrapolation methods. applications
  and numerical comparison}, Journal of computational and applied mathematics,
  122 (2000), pp.~149--165.

\bibitem{m1}
{\sc E.~Kobler, A.~Effland, K.~Kunisch, and T.~Pock}, {\em Total deep
  variation: A stable regularization method for inverse problems}, IEEE
  transactions on pattern analysis and machine intelligence, 44 (2021),
  pp.~9163--9180.

\bibitem{kolda26}
{\sc T.~G. Kolda and B.~W. Bader}, {\em Tensor decompositions and
  applications}, SIAM review, 51 (2009), pp.~455--500.

\bibitem{lee28}
{\sc N.~Lee and A.~Cichocki}, {\em Fundamental tensor operations for
  large-scale data analysis using tensor network formats}, Multidimensional
  Systems and Signal Processing, 29 (2018), pp.~921--960.

\bibitem{liu2012tensor32}
{\sc J.~Liu, P.~Musialski, P.~Wonka, and J.~Ye}, {\em Tensor completion for
  estimating missing values in visual data}, IEEE transactions on pattern
  analysis and machine intelligence, 35 (2012), pp.~208--220.

\bibitem{l2}
{\sc Y.~Lou, T.~Zeng, S.~Osher, and J.~Xin}, {\em A weighted difference of
  anisotropic and isotropic total variation model for image processing}, SIAM
  Journal on Imaging Sciences, 8 (2015), pp.~1798--1823.

\bibitem{moreau1965proximite37}
{\sc J.-J. Moreau}, {\em Proximit{\'e} et dualit{\'e} dans un espace
  hilbertien}, Bulletin de la Soci{\'e}t{\'e} math{\'e}matique de France, 93
  (1965), pp.~273--299.

\bibitem{i3}
{\sc M.~K. Ng, H.~Shen, E.~Y. Lam, and L.~Zhang}, {\em A total variation
  regularization based super-resolution reconstruction algorithm for digital
  video}, EURASIP Journal on Advances in Signal Processing, (2007),
  pp.~1--16.

\bibitem{l1}
{\sc A.~Padcharoen, P.~Kumam, and J.~Mart{\'\i}nez-Moreno}, {\em Augmented
  lagrangian method for tv-l1-l2 based colour image restoration}, Journal of
  Computational and Applied Mathematics, 354 (2019), pp.~507--519.

\bibitem{parikh40}
{\sc N.~Parikh, S.~Boyd, et~al.}, {\em Proximal algorithms}, Foundations and
  trends{\textregistered} in Optimization, 1 (2014), pp.~127--239.

\bibitem{t1}
{\sc W.~Ring}, {\em Structural properties of solutions to total variation
  regularization problems}, ESAIM: Mathematical Modelling and Numerical
  Analysis, 34 (2000), pp.~799--810.

\bibitem{41rockafellar}
{\sc R.~T. Rockafellar}, {\em Convex analysis}, vol.~11, Princeton university
  press, 1997.

\bibitem{42rudin}
{\sc L.~I. Rudin, S.~Osher, and E.~Fatemi}, {\em Nonlinear total variation
  based noise removal algorithms}, Physica D: nonlinear phenomena, 60 (1992),
  pp.~259--268.

\bibitem{e2}
{\sc A.~Sidi}, {\em Vector extrapolation methods with applications}, SIAM,
  2017.

\bibitem{troltzsch1985glowinski20}
{\sc F.~Tr{\"o}ltzsch}, {\em Numerical Methods for Nonlinear Variational
  Problems.}, Tata Institute of Fundamental Research, 1980.

\bibitem{i1}
{\sc P.~Weiss, L.~Blanc-F{\'e}raud, and G.~Aubert}, {\em Efficient schemes for
  total variation minimization under constraints in image processing}, SIAM
  journal on Scientific Computing, 31 (2009), pp.~2047--2080.

\end{thebibliography}

\end{document}